\newcommand{\T}{{\cal T}}
\newcommand{\Real}{\mathbb R}
\newcommand{\set}[1]{\left\{#1\right\}}
\newcommand{\To}{\longrightarrow}
\newcommand {\cpp}{\mathfrak{X}(\T M)}
\newcommand {\cppp}{\mathfrak{X}(TM)}
\newcommand {\N}{\mathcal{N}}
\def\Section#1{\vspace{30truept}\addtocounter{section}{1}\setcounter{thm}{0}\setcounter{equation}{0}
{\noindent\Large\bf\arabic{section}.~~#1}\par \vspace{12pt}}
\newtheorem{thm}{Theorem}[section]
\newtheorem{cor}[thm]{Corollary}
\newtheorem{prop}[thm]{Proposition}
\theoremstyle{definition}
\newtheorem{defn}[thm]{Definition}
\newtheorem{example}[thm]{Example}
\newtheorem{rem}[thm]{Remark}
\numberwithin{equation}{section}
\newcommand\undersym[2]{\raisebox{-7pt}{\tiny$#2$}{\kern-8pt}\mbox{$#1$}}
\newcommand\undersymm[2]{\raisebox{-8pt}{\tiny$#2$}{\kern-15pt}\mbox{$#1$}}
\newcommand\overast[1]{\raisebox{10pt}{\small$\ast$}{\kern-7.5pt}\mbox{$#1$}}
\newcommand\overlind[1]{\raisebox{10pt}{\small$\overline{{\hspace{2pt}}\star}$}{\kern-7.5pt}\mbox{$#1$}}
\newcommand\overlinc[1]{\raisebox{10pt}{\tiny$\overline{{\hspace{2pt}}\circ}$}{\kern-7.5pt}\mbox{$#1$}}
\newcommand\overlina[1]{\raisebox{10pt}{\small$\overline{{\hspace{1pt}}\ast}$}{\kern-7.5pt}\mbox{$#1$}}
\newcommand\overcirc[1]{\raisebox{10pt}{\tiny{$\circ$}}{\kern-7.5pt}\mbox{$#1$}}
\newcommand\overdiamond[1]{\raisebox{10pt}{\small$\star$}{\kern-7.5pt}\mbox{$#1$}}
\begin{document}
\title{\bf{Nullity distributions associated  with Chern  connection}}
\author{\bf{ Nabil L. Youssef$^{\,1}$ and S. G.
Elgendi$^{2,3}$}}
\date{}
%\thanks{\it Department of Mathematics, etc}
%\pagestyle{fancy}
             % End of preamble and beginning of text.
\maketitle                     % Produces the title.
\vspace{-1.16cm}
\begin{center}
{$^{1}$Department of Mathematics, Faculty of Science,\\ Cairo
University, Giza, Egypt}
\end{center}

\begin{center}
{$^{2}$Department of Mathematics, Faculty of Science,\\ Benha
University, Benha,
 Egypt}
\end{center}

\begin{center}
{$^{3}$Institute  of Mathematics,  University of Debrecen,\\ Debrecen,
Hungary}
\end{center}

\begin{center}
E-mails: nlyoussef@sci.cu.edu.eg, nlyoussef2003@yahoo.fr\\
{\hspace{1.8cm}}salah.ali@fsci.bu.edu.eg, salahelgendi@yahoo.com
\end{center}

\smallskip
\vspace{1cm} \maketitle
\smallskip
{\vspace{-1.1cm}} \noindent{\bf Abstract.}
 The nullity distributions of the two curvature tensors \, $\overast{R}$ and $\overast{P}$  of the  Chern connection of a Finsler manifold  are investigated. The completeness of the nullity foliation associated with the nullity distribution $\N_{R^\ast}$  is proved. Two counterexamples are given: the first shows that  $\N_{R^\ast}$  does not coincide with the kernel distribution of \, $\overast{R}$; the second  illustrates  that   $\N_{P^\ast}$  is not  completely integrable. We give a simple  class of a non-Berwaldian Landsberg  spaces with singularities.

\vspace{7pt}
\medskip\noindent{\bf Keywords: \/}\, Klein-Grifone formalism, Chern connection, nullity distribution,  kernel distribution,    nullity foliation, autoparallel submanifold.\\

\medskip\noindent{\bf MSC 2010:\/} 53C60,
53B40, 58B20,  53C12.
%\newpage
%%%%%%%%%%%%%%%%%%%%%%%%%%%%%%%%%%%%%%%%%%%%%%%%%%%%%%%% %%%%%%%%%%%%%%%%%%%%%%%%%%%%%%%%%%%%%%%%%%%%%%%%%%%%%%%%%%%%%%%%%%%%%%%%%%%%%%%%%%%
%\vspace{1cm}
%Introduction
%\vspace{30truept}\centerline{\Large\bf{Introduction}}\vspace{12pt}

\Section{Introduction}

   Adopting the {pullback approach} to  Finsler geometry, the nullity distribution  has been investigated, for example, in  \cite{akbar.nul3., akbar.null.,ND-Zadeh}.  In 2011, Bidabad and  Refie-Rad \cite{bidabad} studied a more general distribution called $k$-nullity distribution. On the other hand, in 1982, Youssef \cite{Nabil.2,  Nabil.1} studied the nullity distributions of the curvature tensors of Barthel
 and Berwald connections, adopting the {Klein-Grifone approach }to  Finsler geometry. Moreover, Youssef  et al. \cite{ND-cartan} studied the nullity distributions associated to the  Cartan connection.

In their paper \cite{Chern}, the present  authors   investigated  the existence and uniqueness of the  Chern connection and studied the properties of its curvature tensors following the Klein-Grifone approach.
In this paper, we investigate  the nullity distributions associated with the  Chern connection. We prove the integrability and the autoparallel property of the nullity distribution      $\N_{R^\ast}$  of the Chern h-curvature \, $\overast{R}$. Moreover, we prove the completeness of the nullity foliation associated with $\N_{R^\ast}$.   We give two interesting  counterexamples.  The first shows that the nullity distribution $\N_{R^\ast}$ does not coincide with the kernel distribution of \, $\overast{R}$ ($\N_{R^\ast}$ is a proper sub-distribution of $\text{Ker}_{R^\ast}$). The second  shows that   $\N_{P^\ast}$  is not  completely integrable.  As a by-product, this allows us to give a simple  class of  non-Berwaldian Landsberg  spaces with singularities.

%%%%%%%%%%%%%%%%%%%%%%%%%%%%%$$$$$$$$$$$$$$$$$$ $$$$$$$$$$$$$$$$$$$$$$$$$$$$$$$$&&&&&&&&&&&&&&&&&&&&&&&&&&&&&&&&&&&&&&&&&&&&&&&&&&
\Section{Notation and Preliminaries}
In this section we present a brief account of the basic concepts of Klein-Grifone's theory of Finsler manifolds. For details, we refer to references \cite{r21,r22,r27,Nabil.1}. We begin with some notational conventions.

Throughout, $M$ is a smooth manifold of finite dimension $n$. The $\Real$-algebra of smooth real-valued functions on $M$ is denoted by $C^\infty(M)$;  $\mathfrak{X}(M)$ stands for the $C^\infty(M)$-module of vector fields on $M$. The tangent bundle of  $M$ is $\pi_{M}:TM\longrightarrow M$, the subbundle of nonzero tangent vectors
 to $M$ is $\pi: \T M\longrightarrow M$. The vertical subbundle of $TTM$  is denoted by $V(TM)$. The pull-back of  $TM$ over  $\pi$ is $P:\pi^{-1}(TM)\longrightarrow \T M$. If $X\in  \mathfrak{X}(M)$, $ i_{X}$ and $\mathcal{L}_X$ denote the interior product by $X$ and the Lie derivative  with respect to $X$, respectively. The differential of $f\in C^\infty(M) $ is $df$. A vector $\ell$-form on $M$ is a skew-symmetric $C^\infty(M)$-linear map $L:(\mathfrak{X}(M))^\ell\longrightarrow \mathfrak{X}(M)$. Every vector $\ell$-form $L$ defines two graded derivations $i_L$ and $d_L$ of the Grassman algebra of $M$ such that
 $$i_Lf=0, \,\,\,\, i_Ldf=df\circ L\,\,\,\,\,\, (f\in C^\infty(M)),$$
  $$d_L:=[i_L,d]=i_L\circ d-(-1)^{\ell-1}di_L.$$

 We have the following short exact sequence of vector bundle morphisms:
\vspace{-0.1cm}
$$0\longrightarrow
 \T M \times_M TM\stackrel{\gamma}\longrightarrow T(\T M)\stackrel{\rho}\longrightarrow
\T M \times_M TM\longrightarrow 0.$$
 Here $\rho := (\pi_{\T M},\pi_\ast)$,  and $\gamma$ is defined by  $\gamma (u,v):=j_{u}(v)$, where
$j_{u}$  is the canonical  isomorphism  from $T_{\pi_{M}(v)}M$ onto $ T_{u}(T_{\pi_{M}(v)}M)$.
Then,   $J:=\gamma\circ\rho$ is a vector $1$-form  on $TM$  called the vertical endomorphism. The Liouville vector field
on $TM$ is the vector field defined by
${C}:=\gamma\circ\overline{\eta},\,\, \overline{\eta}(u)=(u,u),\, u\in TM.$

 A differential   form $\omega$ (resp. a vector form $L$) on $TM$ is semi-basic if $ i_{JX}\omega=0$ (resp. $ i_{JX}L=0$ and $ JL=0$),  for all $X\in \cppp$.  A vector $1$-form $G$ on $TM$ is called a {\textit{Grifone connection}} if it is smooth on
 $\T M$, continuous  on $TM$ and satisfies
$J G=J, \,\, G J=-J $.
The vertical and horizontal   projectors $v$\,  and
$h$ associated to $G$ are defined   by
  $$v:=\frac{1}{2}
 (I-G)\, \,\, \text{and}\,\,\, h:=\frac{1}{2} (I+G).$$

The almost complex structure determined by $G$ is the vector $1$-form $\textbf{F}$ characterized by $\textbf{F}J=h$ and $\textbf{F}h=-J$.

A Grifone connection $G$ induces the direct sum  decomposition
$$TT M=
V(TM)\oplus H(TM), \,\,\, H(TM):=\emph{{\text{Im}}} ( h).$$
The subbundle $H(TM)$ is called the $G$-horizontal subbundle of $TTM$, the module of its smooth sections will be denoted by $\mathfrak{X}^h(\T M)$.

A Grifone connection $G$ is homogeneous  if $[C,G]=0$. The torsion and the curvature of $G$ are the vector $2$-forms
$t:=\frac{1}{2} [J,G]$ and $\mathfrak{R}:=-\frac{1}{2}[h,h]$, respectively. Note that in the last three equalities the brackets mean   Fr\"{o}licher-Nijenhuis  bracket \cite{r20}.

A function  $E: TM \To \Real$ is called a {\textit{Finslerian energy}} function if it is of class $C^{1}$ on $TM$ and $C^\infty$ on $\T M$; $E(u)>0 $ if $u\in \T M$ and $E(0)=0$; $C\cdot E =2E$, i.e., $E$ is $2^+$-homogeneous; the fundamental  $2$-form     $\Omega:=dd_{J}E$   has  maximal rank.  A {\textit{Finsler manifold} } is a manifold together with a Finslerian energy. If  $(M,E)$  a  Finsler manifold, then
\begin{enumerate}
\item[\textup{(i)}] there exists a unique spray $S$ for $M$ such that $i_{S}\Omega =-dE$;
\item[\textup{(ii)}]  there exists a unique homogeneous Grifone connection on $TM$ with vanishing torsion, namely $G = [J,S]$, such that   $d_hE=0$ (\lq $G$ is conservative').
\end{enumerate}
We say that $S$ is the {\textit{canonical spray}} and $G$ is the {\textit{canonical connection}} or {\textit{Barthel connection}} of $(M,E)$.

If $(M,E)$ is  a  Finsler manifold, then the map $\overline{g}$ given by
$$\overline{g}(J X,J Y):=\Omega(JX,Y); \,\,  \,\,  X, Y \in  \cpp$$
is  a  metric tensor  on $V(TM)$. It   can be extended to a metric tensor $g$ on $T(TM)$ by

 \begin{equation}\label{metricg}
 g(X,Y):=\overline{g}(JX,JY)+\overline{g}(vX,vY)=\Omega(X,\textbf{F}Y).
\end{equation}

Now we recall three famous covariant derivative  operators on a Finsler manifold, called also  \lq connections'. They are the \textit{Berwald connection }\, $\overcirc{D}$, the \textit{Cartan  connection} ${D}$ and the \textit{Chern  connection }\,  $\overast{D}$, given by

\begin{equation}\label{berwaldconn.}
  \overcirc{D}_{JX}JY=J[JX,Y],\quad
\overcirc{D}_{hX}JY=v[hX,JY],\quad
  \overcirc{D}\textbf{F}=0;
\end{equation}
\begin{equation}\label{cartanconn.}
  D_{JX}JY=\overcirc{D}_{JX}JY+\mathcal{C}(X,Y),\quad
  D_{hX}JY=\overcirc{D}_{hX}JY+\mathcal{C}'(X,Y),\quad
  {D}\textbf{F}=0;
\end{equation}
\begin{equation}\label{chernconn.}
  \overast{D}_{JX}JY=J[JX,Y],\quad
 \overast{D}_{hX}JY=v[hX,JY]+\mathcal{C}'(X,Y),\quad
  \overast{D}\textbf{F}=0,
\end{equation}
($X,Y\in \mathfrak{X}(\T M)$). In the formulas (\ref{cartanconn.}) and (\ref{chernconn.})
 $\mathcal{C}$ is the \textit{Cartan tensor},  $\mathcal{C}'$ is the \textit{Landsberg tensor} of $(M,E)$. For their definition, see \cite{r22}, p. 329.
 The tensors  ${\mathcal{C}}$ and $\mathcal{C}'$   are symmetric,  semi-basic and for arbitrary  semispray $S$ on $TM$, we have
  \begin{equation}\label{c(s)}
  {\mathcal{C}}(X,S)=\mathcal{C}'(X,S)=0.
  \end{equation}

  Let \, $\overast{R}$ and \,  $\overast{P}$ be the h-curvature and  the hv-curvature  of \, $\overast{D}$, respectively. We list some important identities  from \cite{Chern}, which will be needed in the sequel. Below $X$, $Y$, $Z$, $W$ are vector fields, $S$ is a semispray on $\T M$.
  \begin{equation}\label{chern.[]}
  [hX,hY]=h(\,\overast{D}_{hX}Y-\,\overast{D}_{hY}X)-\mathfrak{R}(X,Y);
\end{equation}
  \begin{equation}\label{eq.1}
  \overast{R}(X,Y)Z=R(X,Y)Z-\mathcal{C}(\textbf{F}\mathfrak{R}(X,Y),Z),
  \end{equation}
  where $R$ is the h-curvature of $D$;
  \begin{equation}\label{eq.2}
  \overast{P}(X,Y)Z=\overcirc{P}(X,Y)Z-(\,\,\overast{D}_{JY}\mathcal{C}')(X,Z),
  \end{equation}
  where \,$\overcirc{P}$ is the hv-curvature of \,$\overcirc{D}$;
   \begin{equation}\label{eq.3}
  \overast{R}(X,Y)S=\mathfrak{R}(X,Y);
  \end{equation}
   \begin{equation}\label{eq.4}
  \overast{P}(X,Y)S=\overast{P}(S,Y)X=\mathcal{C}'(X,Y),\,\,\,\overast{P}(X,S)Z=0;
  \end{equation}
  \begin{equation}\label{eq.5}
  \mathfrak{S}_{X,Y,Z}\, \{\,\overast{R}(X,Y)Z\}=0;
  \end{equation}
   \begin{equation}\label{eq.6}
\mathfrak{S}_{X,Y,Z}\,\{(\,\overast{D}_{hX}\,\mathfrak{R})(Y,Z)\}=\mathfrak{S}_{X,Y,Z}\{\,
  \mathcal{C}'(\textbf{F}\mathfrak{R}(X,Y),Z)\};
  \end{equation}
   \begin{equation}\label{eq.7}
 \mathfrak{S}_{X,Y,Z}\{\,(\,\overast{D}_{hX}\,\overast{R})(Y,Z)\}=\mathfrak{S}_{X,Y,Z}\{\, \overast{P}(X,\textbf{F}\mathfrak{R}(Y,Z))\};
  \end{equation}
   \begin{equation}\label{eq.8}
 (\,\overast{D}_{hX}\,\overast{P})(Y,Z)-(\,\overast{D}_{hY}\,\overast{P})(X,Z)+(\,\overast{D}_{JZ}\,\overast{R})(X,Y)
  =\overast{P}(X,\textbf{F}\mathcal{C}'(Y,Z))
  -\overast{P}(Y,\textbf{F}\mathcal{C}'(X,Z));
  \end{equation}
  If $\mathfrak{R}=0$, then
  \begin{equation}\label{eq.9}
 \overast{R}(X,Y,Z,W)=\,\overast{R}(Z,W,X,Y),
  \end{equation}
  where \,$ \overast{R}(X,Y,Z,W):=g(\,\overast{R}(X,Y)Z,JW)$.

%%%%%%%%%%%%%%%%%%%%%%%%%%%%%%%%%%%%%%%%%%%%%%%%%%%%%%%%%%%%%%%%%%%%%%%%%%%%%%%%%%%%%%%%%%%%%%%%%%%%

\Section{Nullity distribution of the Chern h-curvature}
In this section, we investigate the nullity distribution of the Chern connection.  It should be noted that   the nullity distributions of  the Barthel, Berwald and Cartan  connections  have already been studied in \cite{Nabil.2,Nabil.1,ND-cartan}, respectively. First, we  study the nullity
 distribution of the h-curvature tensor.

\begin{defn}\label{nul.chern} Let \, $\overast{R}$ be the h-curvature tensor of the Chern connection.
The \textnormal{nullity space} of \, $\overast{R}$ at a point $z\in TM$ is the subspace of $H_z(TM)$ defined by
$${\N}_{R^\ast}(z):=\{v\in H_z(TM)| \,\,  \overast{R}_z(v,w)=0, \, \,\text{for all}\,\, w\in H_z(TM)\}.$$
The dimension of ${\N}_{R^\ast}(z)$, denoted by  ${\mu}_{{R^\ast}}(z)$, is the  \textnormal{nullity index} of \, $\overast{R}$ at $z$.
If the  nullity index ${\mu}_{R^\ast}$ is constant,
then the map ${\mathcal{N}}_{R^\ast}:z\mapsto {\N}_{R^\ast}(z) $ defines a distribution $\N_{R^\ast}$ of
rank ${\mu}_{R^\ast}$, called the \textnormal{nullity distribution} of\, $\overast{R}$.
Any  smooth section in the nullity distribution $\N_{R^\ast}$ is called  \textnormal{a nullity vector field}.
We denote by $\Gamma({\N}_{R^\ast})$ the $C^\infty(TM)$-module of the nullity vector fields.
We shall assume that  $\mu_{R^\ast}\neq 0$ \,and \,$\mu_{R^\ast}\neq n$.
 \end{defn}

Let  $\N_{R^\ast}(x):=\pi_\ast(\N_{R^\ast}(z))$ if $\pi(z)=x$. Then   $\N_{R^\ast}(x)$ isomorphic to $\N_{R^\ast}(z)$ via the isomorphism  $\pi_\ast\!\upharpoonright_{H_z(TM)}$.

 \begin{defn}\label{ker.chern}
The kernel of \, $\overast{R}$ at the point $z\in TM$  is defined by
$$\emph{\text{Ker}}_{R^\ast}(z):=\{u\in H_z(TM)| \, \, {\overast{R}_z}(v,w) u=0, \, \text{for all}\, \, v,w\in H_z(TM)\}. $$
\end{defn}
We have  $\emph{\text{Ker}}_{R^\ast}(x)=\pi_\ast(\emph{\text{Ker}}_{R^\ast}(z))$; $x=\pi(z)$.
\begin{prop}\label{chern.nul} The nullity distribution $\N_{R^\ast}$ has the following properties:
~\par
\begin{enumerate}
   \item[\textup{(1)}] $\N_{R^\ast}\neq \phi$ and $\emph{\text{Ker}}_{R^\ast}\neq \phi$.

  \item[\textup{(2)}] $\N_{R^\ast}\subseteq \mathcal{N}_\mathfrak{R}$, where $\mathcal{N}_\mathfrak{R}$
  is the nullity distribution of the curvature $\mathfrak{R}$ of the Barthel connection.

  \item[\textup{(3)}]  $ \N_{R^\ast}\subseteq \emph{\text{Ker}}_{R^\ast}$.

  \item[\textup{(4)}] If the canonical spray $S$ belongs to $\Gamma(\N_{R^\ast})$, then $\mathfrak{R}=0$.

  \item[\textup{(5)}]  If $X\in \Gamma(\N_{R^\ast})$, then $[C,X]\in \Gamma(\N_{R^\ast})$
and,  consequently,
$[C,X]\in \Gamma(\N_{\mathfrak{R}})$.
\end{enumerate}

\end{prop}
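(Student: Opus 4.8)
The plan is to prove the five assertions essentially independently, extracting each from one of the curvature identities \eqref{eq.3}--\eqref{eq.5} together with the homogeneity of the canonical connection; only item (5) requires real work. For (1), I would note that $\N_{R^\ast}\neq\phi$ is immediate from the standing assumption $\mu_{R^\ast}\neq 0$, which supplies a nonzero nullity vector at each point, and that the non-emptiness of $\text{Ker}_{R^\ast}$ then follows from the inclusion established in (3). For (2), I would take $v\in\Gamma(\N_{R^\ast})$ and evaluate its nullity condition on the canonical spray: by definition $\overast{R}(v,w)S=0$ for every horizontal $w$, while \eqref{eq.3} gives $\overast{R}(v,w)S=\mathfrak{R}(v,w)$, so $\mathfrak{R}(v,w)=0$ for all $w$, i.e. $v\in\Gamma(\N_\mathfrak{R})$.

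For (3), the tool is the first Bianchi identity \eqref{eq.5}. Given $v\in\N_{R^\ast}$ and arbitrary horizontal $a,b$, the cyclic sum $\overast{R}(a,b)v+\overast{R}(b,v)a+\overast{R}(v,a)b=0$ collapses: the last two terms vanish because $\overast{R}(v,\cdot)=0$ as an operator (using $\overast{R}(b,v)=-\overast{R}(v,b)$), leaving $\overast{R}(a,b)v=0$, which is exactly $v\in\text{Ker}_{R^\ast}$. For (4), I would combine \eqref{eq.5} and \eqref{eq.3}: assuming $S\in\Gamma(\N_{R^\ast})$, put $Z=S$ in the cyclic identity; the terms $\overast{R}(S,X)Y$ and $\overast{R}(Y,S)X$ vanish by the nullity of $S$, forcing $\overast{R}(X,Y)S=0$ for all $X,Y$, and then \eqref{eq.3} yields $\mathfrak{R}(X,Y)=0$, i.e. $\mathfrak{R}=0$.

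The main effort is (5), which is an invariance statement under the Liouville flow. First I would record that, since the Barthel connection is homogeneous ($[C,G]=0$), the horizontal projector satisfies $\mathcal{L}_Ch=0$, whence $[C,hX]=h[C,X]$; in particular $[C,X]$ remains horizontal when $X$ is. Second, I would invoke the homogeneity of the Chern h-curvature: because $\overast{R}$ is assembled from the homogeneous data $G$ and $\mathcal{C}'$, its fibre-components are $0$-homogeneous, which yields the derivation identity $[C,\overast{R}(X,Y)Z]=\overast{R}([C,X],Y)Z+\overast{R}(X,[C,Y])Z+\overast{R}(X,Y)[C,Z]$. Now fix $X\in\Gamma(\N_{R^\ast})$; then $\overast{R}(X,w)=0$ for every horizontal $w$, so $\overast{R}(X,Y)$ is the zero operator. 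Consequently the left-hand side vanishes, as do $\overast{R}(X,[C,Y])Z$ (since $\overast{R}$ ignores the vertical part of $[C,Y]$ and $X$ is a nullity vector) and $\overast{R}(X,Y)[C,Z]$, leaving $\overast{R}([C,X],Y)Z=0$ for all $Y,Z$. Hence $[C,X]\in\Gamma(\N_{R^\ast})$, and the stated consequence $[C,X]\in\Gamma(\N_\mathfrak{R})$ is then immediate from (2).

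The delicate point I expect to be the crux is justifying the homogeneity derivation identity for $\overast{R}$ cleanly in the Klein--Grifone formalism. If an explicit relation $\mathcal{L}_C\overast{R}=0$ is not directly available from \cite{Chern}, I would derive it from the homogeneity of $G$ and of $\mathcal{C}'$ through \eqref{eq.1}, observing that the precise homogeneity degree is in fact irrelevant for the argument: any scalar multiple of $\overast{R}(X,Y)Z$ appearing in the derivation identity is killed by the nullity of $X$, so the conclusion $\overast{R}([C,X],Y)Z=0$ survives regardless of that degree.
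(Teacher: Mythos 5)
Your proposal is correct and follows essentially the same route as the paper: (2) via identity (\ref{eq.3}), (3) and (4) via the Bianchi identity (\ref{eq.5}) together with (\ref{eq.3}), and (5) via invariance of $\overast{R}$ under the Liouville flow combined with $[C,h]=0$. The only cosmetic difference is in (5), where the paper invokes the cited fact $\overast{D}_C\,\overast{R}=0$ and the identity $\overast{D}_CX=[C,X]$ for horizontal $X$, whereas you phrase the same derivation argument through $\mathcal{L}_C$; as you correctly observe, the nullity of $X$ kills every term except $\overast{R}([C,X],Y)Z$ regardless of the exact homogeneity degree.
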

\begin{proof}
 (2) Let $X$ be  a  nullity  vector field. Using  (\ref{eq.3}),  we have %{\vspace{-.5cm}}
\begin{eqnarray*}
             % \nonumber to remove numbering (before each equation)
               X\in \Gamma(\N_{R^\ast}) &\Longrightarrow& \overast{R}(X,Y)Z=0\quad \text{for all}\, \, Y,Z \in \cppp \\
                &\Longrightarrow& \overast{R}(X,Y)S=0\quad \,\text{for all}\, \, Y \in \cppp\\
                &\Longrightarrow& \mathfrak{R}(X,Y)=0\quad\,\,\,\, \text{for all}\, \, Y \in \cppp\\
                 &\Longrightarrow&  X\in \Gamma(\N_{\mathfrak{R}}).
             \end{eqnarray*}
\noindent (3) Let $Z\in \Gamma(\mathcal{N}_{R^\ast} )$,
then,  by  (\ref{eq.5}), we have
 $\mathfrak{S}_{X,Y,Z}\{\,\overast{R}(X,Y)Z\}=0.$
Since \, $\overast{R}(Y,Z)X=\overast{R}(Z,X)Y=0$,
 then the result follows.\\

\noindent (4) This is an immediate consequence of  (\ref{eq.3}).\\

\noindent (5)  Let  $X\in \Gamma(\N_{R^\ast})$. Since  $\,\overast{D}_C\,\overast{R}=0$ \cite{Chern} , we get
 $(\,\overast{D}_C\,\overast{R})(X,Y)=0,$
which leads to
$\overast{R}(\,\overast{D}_CX,Y)=0.$
Using   (\ref{chernconn.}), we have $\overast{R}([C,X],Y)=0$. By the homogeneity of $h$,  $[C,h]=0$, from which  $[C,hX]=h[C,X]$. That is,
 $[C,hX]$ is horizontal.
 Hence, $[C,X]\in \Gamma(\N_{R^\ast})$. Consequently, by (2),
$[C,X]\in \Gamma(\mathcal{N}_{\mathfrak{R}})$.
\end{proof}

It is important to note that the reverse inclusion in the property  (3) of Proposition \ref{chern.nul} is not true;  that is, ${\text{Ker}}_{R^\ast}\not\subset \mathcal{N}_{R^\ast}$. This is shown by the next example  in which the calculations are performed by using \cite{CFG}.

\bigskip

\begin{example}
 Let $M=\{(x^1,x^2,x^3,x^4)\in \mathbb{R}^4| x^2>0\}$ and \\
 $U=\{(x^1,...,x^4;y^1,...,y^4)\in \mathbb{R}^4 \times \mathbb{R}^4: \,y^1\neq 0, y^2\neq 0\}\subset TM$. Define   $F$  on $U$ by
 $$F(x,y) := \, ({{{{( x^2)}}^{2}{{(y^1)}}^{4}+{{( y^2)}}^{4}+{{( y^3)}}^{4}+{{( y^4)}}^{4}}})^{1/4}.$$

According to \cite{appl.}, the  nullity distribution of the Cartan h-curvature $R$ of $(M,F)$ is
\begin{equation}\label{null.1}
\N_R=\{sh_3+th_4\in \mathfrak{X}^h(\T M)| \,s,t\in \mathbb{R}\}
\end{equation}
 and the kernel distribution $\ker_R$ of $R$ is
 \begin{equation}\label{ker1}
\begin{split}
 \ker_R=\left\{s\left(\frac{y^1}{y^2}h_1+h_2+\frac{x^2{(y^1)}^4+{(y^2)}^4+2{(y^3)}^4+2{(y^4)}^4}{y^2{(y^4)}^3}h_4\right)\right. &\\ \left.+t\left(h_3-\frac{{(y^3)}^3}{{(y^4)}^3}h_4\right)\in \mathfrak{X}^h(\T M)|s,t\in \mathbb{R}\right\},
 \end{split}
 \end{equation}
where $h_i:=\frac{\partial}{\partial x^i}-N^m_i\frac{\partial}{\partial y^m}$ form a basis of $\mathfrak{X}^h(\T M)$.

\bigskip

Now, by the NF-package \cite{CFG}, we can perform the following calculations.

\smallskip
\begin{maplegroup}
\begin{Maple Normal}{
\textbf{Chern h-curvature\, \overast{R}:}}\end{Maple Normal}

\end{maplegroup}
\begin{maplegroup}
\begin{mapleinput}
\mapleinline{active}{1d}{show(Rchern[i, -h, -j, -k]); }{\[\]}
\end{mapleinput}
\mapleresult
\begin{maplelatex}
\mapleinline{inert}{2d}{}{}{\[ {\it Rchern}^{{\it x1} }_{{\it x1 x1 x2} }=\frac{1}{18}\frac{4{\it y2}^{4}+{\it x2}^{2}{\it y1}^{4}}{{\it x2}^{2}{\it y1} {\it y2}^{3}} \hspace{2.3cm} {\it Rchern}^{{\it x1} }_{{\it x2 x1 x2} }=-\frac{1}{9}\frac{4{\it y2}^{4}+{\it x2}^{2}{\it y1}^{4}}{{\it x2}^{2}{\it y2}^{4}}\]}
\end{maplelatex}
\mapleresult
\begin{maplelatex}
\mapleinline{inert}{2d}{}{}{\[ {\it Rchern}^{{\it x2} }_{{\it x1 x1 x2} }=\frac{1}{9}\frac{4{\it y1}{2}{\it y2}^{4}+{\it x2}^{2}{\it y1}^{6}}{{\it y2}^{6}}
\hspace{2cm} {\it Rchern}^{{\it x2} }_{{\it x2 x1 x2} }=-\frac{1}{18}\frac{4{\it y1}^{3}{\it y2}^{4}+{\it x2}^{2}{\it y1}^{7}}{{\it y2}^{7}}\]}
\end{maplelatex}
\end{maplegroup}
\vspace{7pt}
\begin{maplegroup}
\begin{Maple Normal}{
\textbf{Nullity distribution of\, $\overast{R}$:}}\end{Maple Normal}

\end{maplegroup}
\begin{maplegroup}
\begin{mapleinput}
\mapleinline{active}{1d}{definetensor(RchernW[h, -i, -k] = Rchern[h, -i, -j, -k]*W[j]); }{\[\]}
\end{mapleinput}
\end{maplegroup}
\begin{maplegroup}
\begin{mapleinput}
\mapleinline{active}{1d}{show(RchernW[h, -i, -k]); }{\[\]}
\end{mapleinput}
\mapleresult

\begin{maplelatex}
\mapleinline{inert}{2d}{}{}{\[ {\it RchernW}^{{\it x1} }_{{\it x2 x2} }=-\frac{1}{9}\frac{\left(4{\it y2}^{4}+{\it x2}^{2}{\it y1}^{4}\right)W^{{\it x1} }}{{\it x2}^{2}{\it y2}^{4}}\hspace{1cm} {\it RchernW}^{{\it x2} }_{{\it x1 x1} }=-\frac{1}{9}\frac{\left(4{\it y1}^{2}{\it y2}^{4}+{\it x2}^{2}{\it y1}^{6}\right)W^{{\it x2} }}{{\it y2}^{6}}\]}
\end{maplelatex}
\mapleresult
\end{maplegroup}

\bigskip

Putting ${\it RchernW}^{\it x1 }_{\it x2 x2 }=0$ and  ${\it RchernW}^{\it x2 }_{\it x1 x1 }=0$, then we have a system of algebraic equations. The NF-package yields the following solution: $W^1=W^2=0,   W^3=s, W^4=t$, where $s,t\in \mathbb{R}$. Then,  the  nullity distribution is
\begin{equation}\label{nullch.1}
\N_{R^\ast}=\{sh_3+th_4\in \mathfrak{X}^h(\T M)|\,s,t\in \mathbb{R}\}.
\end{equation}

\begin{Maple Normal}{
\begin{Maple Normal}{
\textbf{Kernel distribution of\, $\overast{R}$:}}\end{Maple Normal}

}\end{Maple Normal}

\begin{maplegroup}
\begin{mapleinput}
\mapleinline{active}{1d}{definetensor(RchernZ[h, -j, -k] = Rchern[h, -i, -j, -k]*Z[i]); }{\[\]}
\end{mapleinput}
\end{maplegroup}
\begin{maplegroup}
\begin{mapleinput}
\mapleinline{active}{1d}{show(RchernZ[h, -j, -k]); }{\[\]}
\end{mapleinput}
\mapleresult
\begin{maplelatex}
\mapleinline{inert}{2d}{}{\[ {\it RchernZ}^{{\it x1} }_{{\it x1 x2} }=\frac{1}{18}\frac{\left(4{\it y2}^{4}+{\it x2}^{2}{\it y1}^{4}\right)Z^{{\it x1} }}{{\it x2}^{2}{\it y1} {\it y2}^{3}}-\frac{1}{9}\frac{\left(4{\it y2}^{4}+{\it x2}^{2}{\it y1}^{4}\right)Z^{{\it x2} }}{{\it x2}^{2}{\it y2}^{4}}\]}
\end{maplelatex}
\mapleresult
\end{maplegroup}

\bigskip

Putting ${\it RchernZ}^{\it x1 }_{\it x1 x2 }=0$, we get $Z^1=\frac{2y1}{y2}r,   Z^2=r, Z^3=s, Z^4=t;\,r,s,t\in \mathbb{R}$. Then,  the kernel distribution  $ \text{Ker}_{R^\ast}$ is
\begin{equation}\label{kerch.1}
\text{Ker}_{R^\ast}=\set{r\left(\frac{2y1}{y2}h_1+h_2\right)+sh_3+th_4\in \mathfrak{X}^h(\T M)|\,r,s,t\in \mathbb{R}}.
\end{equation}
Equations (\ref{nullch.1}) and (\ref{kerch.1}) show that $\text{Ker}_{R^\ast}$ can not be a sub-distribution of $\N_{R^\ast}$.
\end{example}
\begin{thm}\label{coincide}
The  nullity distribution $\N_{R^\ast}$  of the  Chern h-curvature and the  nullity distribution $\N_R$ of the Cartan h-curvature
  coincide.
\end{thm}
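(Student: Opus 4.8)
The whole argument turns on the single identity (\ref{eq.1}),
$$\overast{R}(X,Y)Z = R(X,Y)Z - \mathcal{C}(\textbf{F}\mathfrak{R}(X,Y),Z),$$
which exhibits the Chern h-curvature as the Cartan h-curvature modified by exactly one term, built from the Cartan tensor $\mathcal{C}$ and the Barthel curvature $\mathfrak{R}$. The plan is to show that this correction term vanishes precisely on the vectors entering either nullity space, so that $\overast{R}$ and $R$ annihilate the same horizontal directions and therefore have identical nullity.

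The key preliminary step is to observe that both nullity distributions are contained in the nullity distribution $\N_{\mathfrak{R}}$ of the Barthel curvature. For $\N_{R^\ast}$ this is already recorded as property (2) of Proposition \ref{chern.nul}. For $\N_R$ I would first establish the Cartan analogue of (\ref{eq.3}), namely $R(X,Y)S=\mathfrak{R}(X,Y)$ for a semispray $S$. This follows by substituting $Z=S$ into (\ref{eq.1}): the correction term becomes $\mathcal{C}(\textbf{F}\mathfrak{R}(X,Y),S)$, which vanishes by (\ref{c(s)}) since the Cartan tensor annihilates the spray, while $\overast{R}(X,Y)S=\mathfrak{R}(X,Y)$ by (\ref{eq.3}); hence $R(X,Y)S=\mathfrak{R}(X,Y)$. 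Consequently, if $X\in\Gamma(\N_R)$ then $R(X,Y)S=0$ forces $\mathfrak{R}(X,Y)=0$ for all $Y$, i.e. $X\in\Gamma(\N_{\mathfrak{R}})$.

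With this in hand the two inclusions are immediate from (\ref{eq.1}). If $X\in\Gamma(\N_{R^\ast})$, then by Proposition \ref{chern.nul}(2) we have $\mathfrak{R}(X,Y)=0$, so the correction term disappears and $R(X,Y)Z=\overast{R}(X,Y)Z=0$ for all $Y,Z$; thus $X\in\Gamma(\N_R)$. Conversely, if $X\in\Gamma(\N_R)$, then by the previous paragraph $\mathfrak{R}(X,Y)=0$, the correction term again disappears, and $\overast{R}(X,Y)Z=R(X,Y)Z=0$; thus $X\in\Gamma(\N_{R^\ast})$. Since the two distributions contain the same sections, they coincide, and in particular $\mu_{R^\ast}=\mu_R$.

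I expect the only nonroutine step to be the containment $\N_R\subseteq\N_{\mathfrak{R}}$, that is, the auxiliary identity $R(X,Y)S=\mathfrak{R}(X,Y)$; once the correction term is known to vanish on $\N_{\mathfrak{R}}$, both inclusions reduce to pure substitution into (\ref{eq.1}).
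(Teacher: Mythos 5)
Your proof is correct and follows essentially the same route as the paper: both directions rest on the identity (\ref{eq.1}) together with the containment of each nullity distribution in $\N_{\mathfrak{R}}$, which kills the Cartan-tensor correction term. The only difference is that you derive the auxiliary inclusion $\N_R\subseteq\N_{\mathfrak{R}}$ yourself via $R(X,Y)S=\mathfrak{R}(X,Y)$ (obtained from (\ref{eq.1}), (\ref{eq.3}) and (\ref{c(s)})), whereas the paper simply cites this inclusion from \cite{ND-cartan}; your version is more self-contained but logically identical.
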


\begin{proof} Let $X\in\Gamma(\N_{R^{\ast}})$. Then, by   (\ref{eq.1}) and  Proposition \ref{chern.nul} (2),
$X\in\Gamma(\N_R)$. Hence $\N_{R^{\ast}}$ is a subset of $ \N_R$. Conversely, let $X\in\Gamma(\N_R)$. Then, by (\ref{eq.1}) and by $\N_R\subset\N_\mathfrak{R}$ \cite{ND-cartan}, we get $X\in\Gamma(\N_{R^{\ast}})$, whence, $\N_R\subset \N_{R^\ast}$.
\end{proof}

\begin{rem}
 The above example shows that $\N_{R^{\ast}}\subset\text{Ker}_{R^\ast}$ and the reverse inclusion is false by (\ref{nullch.1}), (\ref{kerch.1}).
 It also shows that although $\N_{R^{\ast}}= \N_R$ (see (\ref{null.1}) and (\ref{nullch.1})),  $\text{Ker}_{R^\ast}\neq\text{Ker}_{R}$  by (\ref{ker1}), (\ref{kerch.1}).
 In view of the above theorem, the reverse inclusion in { (2)} of Proposition \ref{chern.nul} is not true either: $\mathcal{N}_\mathfrak{R}\not\subset \mathcal{N}_R=\mathcal{N}_{R^\ast}$ \cite{appl.}.
 \end{rem}
\begin{defn}
The conullity space of the h-curvature tensor at $z$,
denoted by ${\N_{R^\ast}\!}^\perp(z)$, is the orthogonal complement of $\N_{R^\ast}$ in $H_z(TM)$,  where the orthogonality  is taken with respect to the metric $g$ defined by (\ref{metricg}).
\end{defn}

\begin{prop}
For each point $z\in TM$, either $\mu_{R^\ast}(z)=n$ or  $\mu_{R^\ast}(z)\leq n-2$. Consequently, $\dim{\emph{\text{Ker}}_{R^\ast}}>n-2$.
\end{prop}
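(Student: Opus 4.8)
The plan is to establish the dichotomy by ruling out the single intermediate value $\mu_{R^\ast}(z)=n-1$, and then to deduce the statement about $\text{Ker}_{R^\ast}$ from the inclusion $\N_{R^\ast}\subseteq \text{Ker}_{R^\ast}$ of Proposition \ref{chern.nul}(3) together with this dichotomy. The only property of \,$\overast{R}$ needed for the first and decisive step is its skew-symmetry in the first two arguments, so that $\overast{R}_z(v,v)=0$ for every $v\in H_z(TM)$; this is exactly what forbids a nullity space of codimension one.

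Concretely, I would argue by contradiction. Suppose $\mu_{R^\ast}(z)=n-1$, so that the conullity space $\N_{R^\ast}^{\perp}(z)$ is one-dimensional; choose $v_0\neq 0$ spanning it, giving $H_z(TM)=\N_{R^\ast}(z)\oplus\langle v_0\rangle$. Testing whether $v_0$ itself lies in the nullity space, I take an arbitrary $w\in H_z(TM)$ and write $w=w_0+c\,v_0$ with $w_0\in\N_{R^\ast}(z)$ and $c\in\Real$. Then
$$\overast{R}_z(v_0,w)=\overast{R}_z(v_0,w_0)+c\,\overast{R}_z(v_0,v_0).$$
The last term vanishes by skew-symmetry, while $\overast{R}_z(v_0,w_0)=-\,\overast{R}_z(w_0,v_0)=0$ because $w_0\in\N_{R^\ast}(z)$ annihilates the curvature in its first slot. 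Hence $\overast{R}_z(v_0,w)=0$ for all $w$, that is $v_0\in\N_{R^\ast}(z)$, contradicting $0\neq v_0\perp\N_{R^\ast}(z)$. Thus $\mu_{R^\ast}(z)\neq n-1$, which is precisely the asserted alternative.

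For the consequence about $\text{Ker}_{R^\ast}$, the inclusion $\N_{R^\ast}\subseteq\text{Ker}_{R^\ast}$ settles the case $\mu_{R^\ast}(z)=n$ at once, since then $\text{Ker}_{R^\ast}(z)=H_z(TM)$ has dimension $n>n-2$. The remaining work is to show that when $\mu_{R^\ast}(z)\le n-2$ the kernel is still at most one-codimensional, i.e. $\dim\text{Ker}_{R^\ast}(z)>n-2$. My plan here is to feed a kernel element $Z\in\text{Ker}_{R^\ast}(z)$ into the first Bianchi identity (\ref{eq.5}), $\mathfrak{S}_{X,Y,Z}\{\overast{R}(X,Y)Z\}=0$; for $v\in\text{Ker}_{R^\ast}(z)$ this yields the symmetry $\overast{R}(v,u)x=\overast{R}(v,x)u$ and, when $u,x$ are both in the kernel, the vanishing $\overast{R}(v,u)=0$. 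I would then upgrade these relations by invoking the coincidence $\N_{R^\ast}=\N_R$ of Theorem \ref{coincide} and the metric (Cartan) structure entering through (\ref{eq.1}), so as to convert the kernel condition into a nullity-type condition governed by the Riemannian-type pair symmetry of the Cartan curvature $R$.

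The main obstacle is exactly this last conversion, because, unlike the nullity bound, the kernel estimate cannot come from skew-symmetry alone. At the purely algebraic level a tensor that is skew in its first two arguments and satisfies the first Bianchi identity may have a one-dimensional nullity space and yet a kernel of codimension $\geq 2$ (already for $n=3$), so the bound $\dim\text{Ker}_{R^\ast}(z)>n-2$ genuinely requires the Finsler input: the identity (\ref{eq.3}) relating $\overast{R}(X,Y)S$ to $\mathfrak{R}$, the decomposition (\ref{eq.1}), and the identification of $\N_{R^\ast}$ with the nullity of the metric connection $D$. I expect the delicate point to be the faithful transfer of the pair symmetry of the Cartan curvature to the Chern kernel, since $\text{Ker}_{R^\ast}$ and $\text{Ker}_{R}$ do not coincide, so the metric symmetry must be carried across the correction term in (\ref{eq.1}) rather than used directly.
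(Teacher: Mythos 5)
Your argument for the dichotomy is correct and is essentially the paper's own: both rest solely on the skew-symmetry $\overast{R}_z(v,v)=0$. The paper takes $v\notin\N_{R^\ast}(z)$, produces $w$ with $\overast{R}_z(v,w)\neq 0$, and concludes that the conullity space has dimension at least $2$; you run the equivalent contradiction starting from $\mu_{R^\ast}(z)=n-1$. Your formulation is, if anything, cleaner, since it avoids the paper's loose step ``$v,w\notin\N_{R^\ast}(z)$ and hence $v,w\in{\N_{R^\ast}\!}^\perp(z)$.''

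The gap is in the second assertion, $\dim\text{Ker}_{R^\ast}>n-2$, which you do not prove: you only outline a plan and candidly record the obstruction. Be aware that the paper does no better --- its proof establishes only the dichotomy and attaches the kernel bound with a bare ``Consequently,'' giving no argument --- and your observation that a tensor which is merely skew-symmetric and satisfies the first Bianchi identity (\ref{eq.5}) can have a kernel of codimension $\geq 2$ correctly shows that the bound is \emph{not} a formal consequence of the dichotomy, so some genuine additional input is required. However, the specific route you sketch is unlikely to close the gap: the pair symmetry (\ref{eq.9}) is available only under the hypothesis $\mathfrak{R}=0$, and in that case Theorem \ref{coincidenulker} already gives $\text{Ker}_{R^\ast}=\N_{R^\ast}$, so the claimed inequality would read $\mu_{R^\ast}>n-2$ and, by the dichotomy you just proved, would force $\overast{R}=0$ outright. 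In other words, under the only hypothesis that makes your symmetry argument run, the statement either trivializes or is in tension with the first part. The kernel inequality, if it holds in the stated generality, needs an argument that neither you nor the paper supplies.
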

\begin{proof}
If $\mu_{R^\ast}(z)\neq n$, then there is a non-zero horizontal vector $v\notin \N_{R^\ast}(z) $. It follows that there is a vector $w\in H_z(TM)$
 such that \, $\overast{R}_z(w,v)\neq 0$ and so \,  $\overast{R}_z(v,w)\neq 0$. Then $v,w \notin \N_{R^\ast}(z)$ and hence $v,w \in {\N_{R^\ast}\!}^\perp(z)$. By the antisymmetry of \, $\overast{R}$,  the vectors $v$ and $w$ are  independent.  Thus, $\dim {\N_{R^\ast}\!}^\perp(z)\geq 2$. Consequently, $\mu_{R^\ast}(z)\leq n-2$.
\end{proof}

\begin{prop}
If \,$\mathfrak{R}=0$, then  \em{Im }\!\!$(\,\overast{R}) = (J  \N_{R^\ast} )^\perp$.
Consequently, \text{rank }\!$(\,\overast{R})=n-{\mu}_{R^\ast}$.
\end{prop}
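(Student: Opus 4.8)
The plan is to identify the orthogonal complement of $\text{Im}(\overast{R})$ inside the vertical bundle $V(TM)$ with $J\N_{R^\ast}$, and then to pass back by taking orthogonal complements once more. The one genuine structural input is the symmetry identity (\ref{eq.9}), which is available precisely because $\mathfrak{R}=0$; everything else is a matter of exploiting the nondegeneracy of the vertical metric.

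First I would record two preliminary observations. One, $\text{Im}(\overast{R})$ is a subspace of $V(TM)$ (from the construction of the Chern connection in \cite{Chern}). Two, by the first expression in (\ref{metricg}) together with $J^2=0$ and $v(JW)=JW$, the metric $g$ pairs any vertical vector $A$ with a vertical vector $JW$ through the vertical metric alone: $g(A,JW)=\overline{g}(A,JW)$. Consequently, for $W\in H_z(TM)$ we have $g(\overast{R}(X,Y)Z,JW)=\overast{R}(X,Y,Z,W)$, so that a vertical vector $JW$ lies in $\text{Im}(\overast{R})^\perp$ if and only if $\overast{R}(X,Y,Z,W)=0$ for all $X,Y,Z\in\cppp$.

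Next I would apply the symmetry $\overast{R}(X,Y,Z,W)=\overast{R}(Z,W,X,Y)$ of (\ref{eq.9}). This rewrites the condition above as $g(\overast{R}(Z,W)X,JY)=0$ for all $X,Y,Z$. Since $J$ restricts to an isomorphism $H_z(TM)\to V_z(TM)$ and $\overline{g}$ is nondegenerate on $V(TM)$, letting $Y$ range freely forces $\overast{R}(Z,W)X=0$ for all $X,Z$; by the antisymmetry of $\overast{R}$ in its first two arguments this says exactly that $W\in\N_{R^\ast}$. Hence $\text{Im}(\overast{R})^\perp=J\N_{R^\ast}$, and taking orthogonal complements inside $V(TM)$ (where $g$ is nondegenerate, so double complementation is the identity) yields $\text{Im}(\overast{R})=(J\N_{R^\ast})^\perp$. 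The rank statement then follows by dimension count: $\dim V_z(TM)=n$ and $J$ is injective on $H_z(TM)$, so $\dim J\N_{R^\ast}(z)=\mu_{R^\ast}(z)$ and $\text{rank}(\overast{R})=n-\mu_{R^\ast}$.

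I expect the main difficulty to be bookkeeping rather than conceptual: one must keep all orthogonal complements taken consistently inside $V(TM)$, and check that the metric ever only sees the vertical component of $\overast{R}(X,Y)Z$, so that the nondegeneracy of $\overline{g}$ can legitimately be invoked to strip off the free argument $Y$. The decisive step is the use of (\ref{eq.9}), which is what converts a condition on the \emph{second} curvature slot ($W$) into a condition on the \emph{third} slot, thereby linking $\text{Im}(\overast{R})^\perp$ to the nullity space $\N_{R^\ast}$.
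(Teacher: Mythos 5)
Your proposal is correct and rests on the same key step as the paper's proof: using the symmetry (\ref{eq.9}), valid because $\mathfrak{R}=0$, to convert the pairing $g(\,\overast{R}(Y,Z)W,JX)$ into a condition involving the nullity slot. In fact your argument is slightly more complete than the printed one, which only verifies the inclusion $\text{Im}(\,\overast{R})\subseteq (J\N_{R^\ast})^\perp$ (every $JX$ with $X$ a nullity vector is orthogonal to the image) and leaves the reverse inclusion and the dimension count implicit, whereas you characterize $\text{Im}(\,\overast{R})^\perp$ inside $V(TM)$ exactly as $J\N_{R^\ast}$ via nondegeneracy of $\overline{g}$ and then double-complement.
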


\begin{proof}
For all $X\in \Gamma({\mathcal{N}}_{R^\ast})$ and $Y,Z,W\in \mathfrak{X}^h(\T M)$, we have
\begin{eqnarray*}
  g(\,\overast{R}(Y,Z)W,JX) &=& \overast{R}(Y,Z,W,X)\\
                          &=& \overast{R}(W,X,Y,Z) \,\,\,( \text{by (\ref{eq.9}))}\\
                          &=& -\overast{R}(X,W,Y,Z)\\
                          &=& -g(\,\overast{R}(X,W)Y,JZ)\\
                          &=& 0 \,\,\,( \text{since } X \text{ is a nullity vector field}),
\end{eqnarray*}
as wanted.
\end{proof}

As a direct consequence  of  Theorem \ref{coincide} and the fact that $\N_R$ is completely integrable \cite{ND-cartan}, we have the
following  result.

\begin{cor}\label{c.i.chern}  Let $\mu_{R^\ast}$ be constant on an open subset $U$ of $TM$.
The nullity distribution $z\mapsto \mathcal{N}_{R^\ast}(z)$ is completely integrable on $U$.
\end{cor}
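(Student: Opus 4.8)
The plan is to derive this purely formally from the two ingredients already in hand, since all the analytic substance resides in them. First I would record that the hypothesis that $\mu_{R^\ast}$ is constant on $U$ makes $z\mapsto\N_{R^\ast}(z)$ a genuine smooth, constant-rank distribution on $U$, so that ``completely integrable'' is a meaningful assertion to which the Frobenius theorem applies; the criterion I will verify is involutivity, i.e.\ that $[X,Y]\in\Gamma(\N_{R^\ast})$ whenever $X,Y\in\Gamma(\N_{R^\ast})$.

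The key step is to transfer involutivity from $\N_R$ to $\N_{R^\ast}$ by means of Theorem \ref{coincide}. That theorem yields $\N_{R^\ast}=\N_R$ as subspaces of $\mathfrak{X}^h(\T M)$ at every point, hence the two distributions possess identical modules of smooth sections, $\Gamma(\N_{R^\ast})=\Gamma(\N_R)$. In particular $\mu_R=\mu_{R^\ast}$ is constant on $U$, so $\N_R$ is itself a constant-rank distribution there and the external integrability result of \cite{ND-cartan} is applicable.

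Finally I would conclude as follows. Since $\N_R$ is completely integrable on $U$ by \cite{ND-cartan}, it is involutive, so $[X,Y]\in\Gamma(\N_R)$ whenever $X,Y\in\Gamma(\N_R)$. Given arbitrary $X,Y\in\Gamma(\N_{R^\ast})=\Gamma(\N_R)$, the bracket $[X,Y]$ therefore lies in $\Gamma(\N_R)=\Gamma(\N_{R^\ast})$, which is exactly the involutivity of $\N_{R^\ast}$; the Frobenius theorem then gives complete integrability on $U$.

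There is essentially no obstacle here, and I would not expect to need any computation: the content of the statement has already been absorbed into Theorem \ref{coincide} (the coincidence $\N_{R^\ast}=\N_R$) and into the cited integrability of the Cartan nullity distribution. The one point meriting a line of care is the passage from fibrewise equality of the subspaces $\N_{R^\ast}(z)=\N_R(z)$ to equality of the section modules $\Gamma(\N_{R^\ast})=\Gamma(\N_R)$, but this is immediate once both are known to be smooth constant-rank distributions over the same manifold $U$.
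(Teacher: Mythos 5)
Your proposal is correct and follows exactly the paper's own route: the corollary is stated there as a direct consequence of Theorem \ref{coincide} (the coincidence $\N_{R^\ast}=\N_R$) together with the complete integrability of $\N_R$ established in \cite{ND-cartan}. Your additional remarks on constant rank and the passage to section modules merely make explicit what the paper leaves implicit.
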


According to the Frobenius theorem,  there exists a foliation of $M$ by $\mu_{R^\ast}$-dimensional maximal connected submanifolds  as  leaves, such that the nullity space at a point $x\in M$ is the tangent space to the leaf at $x$. We call the foliation induced by the nullity distribution $\N_{R^\ast}$ the nullity foliation and denote it again by $\N_{R^\ast}$. So, by  Corollary \ref{c.i.chern}, we have the following result.

\begin{thm}\label{chern.autoparallel} The leaves of the nullity foliations $\mathcal{N}_{R^\ast}$ and  $\mathcal{N}_\mathfrak{R}$
are auto-parallel submanifolds with respect to the Chern connection.
\end{thm}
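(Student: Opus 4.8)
The plan is to reduce the autoparallel property for the Chern connection $\overast{D}$ to the corresponding property for the Cartan connection $D$, exploiting the fact that the two connections have the \emph{same} horizontal covariant derivative. Infinitesimally, by Corollary \ref{c.i.chern} the distribution $\N_{R^\ast}$ is integrable, so its leaves are its integral submanifolds, and a leaf is autoparallel with respect to $\overast{D}$ exactly when $\overast{D}_{hX}Y\in\Gamma(\N_{R^\ast})$ for all $X,Y\in\Gamma(\N_{R^\ast})$ (and similarly for $\N_{\mathfrak{R}}$). Since nullity vector fields are horizontal, only the horizontal derivative $\overast{D}_{hX}(hY)$ enters.

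The key observation is a coincidence between the two connections. Comparing (\ref{cartanconn.}) with (\ref{chernconn.}) and using $\overcirc{D}_{hX}JY=v[hX,JY]$, I obtain $\overast{D}_{hX}JY=v[hX,JY]+\mathcal{C}'(X,Y)=\overcirc{D}_{hX}JY+\mathcal{C}'(X,Y)=D_{hX}JY$. Since both connections are $\textbf{F}$-parallel, $\overast{D}\textbf{F}=0=D\textbf{F}$, and $\textbf{F}J=h$, applying $\textbf{F}$ gives $\overast{D}_{hX}(hY)=D_{hX}(hY)$. Thus the operators $\overast{D}_{hX}$ and $D_{hX}$ agree on all of $\mathfrak{X}(\T M)$: the Chern and Cartan connections differ only in their vertical (Cartan-tensor) part. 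Consequently a horizontal distribution is autoparallel for $\overast{D}$ if and only if it is autoparallel for $D$.

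The theorem then follows. By Theorem \ref{coincide} we have $\N_{R^\ast}=\N_R$, and the leaves of $\N_R$, as well as those of $\N_{\mathfrak{R}}$, are autoparallel with respect to the Cartan connection \cite{ND-cartan}; the equivalence of the two autoparallelism notions established above carries both statements over to $\overast{D}$ verbatim.

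I expect the real difficulty to be the \emph{symmetric} part of $\overast{D}_{hX}Y$ — the second fundamental form of the leaf — which is why I avoid proving the theorem by a direct computation. Differentiating $\overast{R}(Y,\cdot)=0$ and inserting the result into the second Bianchi identity (\ref{eq.7}) (or (\ref{eq.6}) in the case of $\mathfrak{R}$) controls only the \emph{antisymmetric} combination: since $\mathfrak{R}(X,Y)=0$ on $\N_{R^\ast}$ by Proposition \ref{chern.nul}(2), formula (\ref{chern.[]}) together with integrability gives $\overast{D}_{hX}Y-\overast{D}_{hY}X=[hX,hY]\in\Gamma(\N_{R^\ast})$, but not the individual terms. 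The Riemannian device of placing a nullity vector in the acted-upon slot (exploiting nullity $\subseteq$ kernel) would finish the argument only when $\mathfrak{R}=0$, where (\ref{eq.9}) forces $\N_{R^\ast}=\text{Ker}_{R^\ast}$; in general $\N_{R^\ast}$ is a proper subdistribution of $\text{Ker}_{R^\ast}$ (see the Example and the Remark), and the Bianchi identity is then polluted by uncontrolled $\overast{P}$- and $\mathfrak{R}$-terms. Routing through the Cartan connection is designed precisely to bypass this obstruction.
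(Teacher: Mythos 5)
Your argument is correct, and it takes a genuinely different route from the paper's. Your key lemma --- that comparing (\ref{cartanconn.}) and (\ref{chernconn.}) via (\ref{berwaldconn.}) gives $\overast{D}_{hX}JY=\overcirc{D}_{hX}JY+\mathcal{C}'(X,Y)=D_{hX}JY$, whence by $\overast{D}\textbf{F}=D\textbf{F}=0$ and $\textbf{F}J=h$ the operators $\overast{D}_{hX}$ and $D_{hX}$ coincide --- is valid and does all the work: a horizontal distribution is $\overast{D}$-autoparallel iff it is $D$-autoparallel, so Theorem \ref{coincide} and the Cartan-connection results of \cite{ND-cartan} transfer verbatim. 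The paper instead (i) asserts that the $\N_{R^\ast}$ case ``can be proved in a similar manner'' to \cite{ND-cartan}, and (ii) treats $\N_\mathfrak{R}$ by a direct computation with the Bianchi identity (\ref{eq.6}), concluding $\mathfrak{R}(\,\overast{D}_{hX}Y,Z)=0$. What your reduction buys is that (i) becomes an identity of operators rather than an analogy, so the Cartan-connection proof need not be repeated at all; what it costs is that you also charge the $\N_\mathfrak{R}$ statement to \cite{ND-cartan}, whereas the paper argues it directly --- but by your own operator identity the two computations are literally the same, so nothing of substance is lost. Your closing diagnosis is moreover on point, and in fact applies to the paper's own displayed computation: with $X,Y\in\Gamma(\N_\mathfrak{R})$ the cyclic sum in (\ref{eq.6}) collapses to $\mathfrak{R}(\,\overast{D}_{hX}Y-\overast{D}_{hY}X,Z)=0$, which controls only the antisymmetric combination and, by (\ref{chern.[]}) together with integrability, is already automatic; so whichever source one cites must supply the additional argument, and routing through the Cartan connection at least localizes that burden in \cite{ND-cartan} instead of duplicating it here.
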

\begin{proof}
The fact that $\N_{R^\ast}$ is auto-parallel  with
 respect to Chern connection can be proved in a similar manner as the analogous result in \cite{ND-cartan}.

On the other hand, the integrability of the nullity distribution $\N_\mathfrak{R}$ of the curvature of Barthel  connection
 has been proved in  \cite{Nabil.2}. We show that if $X,Y\in \Gamma(\N_\mathfrak{R})$, then $\,\overast{D}_XY\in  \Gamma(\N_\mathfrak{R})$.
  By  (\ref{eq.6}), we have
$$\mathfrak{S}_{X,Y,Z}\{(\,\overast{D}_X\mathfrak{R})(Y,Z)\}=\mathfrak{S}_{X,Y,Z}\{\,
  \mathcal{C}'(Z,\textbf{F}\mathfrak{R}(X,Y))\}.$$
Since $X,Y\in \Gamma(\N_\mathfrak{R})$,
$\mathfrak{S}_{X,Y,Z}\{(\,\overast{D}_X\mathfrak{R})(Y,Z)\}=0.$
Consequently,  $\mathfrak{R}(\,\overast{D}_XY,Z)=0 $ for every vector field   $Z\in \cppp$ and $\,\overast{D}_XY\in  \Gamma(\N_\mathfrak{R})$.
\end{proof}

Due to the torsion-freeness  of the Levi-Civita connection in Riemannian geometry,  the two  concepts \lq autoparallel submanifold\rq \, and  \lq totally geodesic
submanifold\rq \, coincide  \cite{kobayashi}. This is not true in Finsler geometry.  However,  every auto-parallel submanifold   is totally geodesic \cite{E.cartan}. So,
 we have:
\begin{cor}\label{leaves}
The leaves of the nullity foliations $\N_\mathfrak{R}$ and $\N_{R^{\ast}}$ are totally geodesic submanifolds with respect to the Chern connection.
\end{cor}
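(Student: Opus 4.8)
The plan is to obtain this statement as an immediate corollary of Theorem \ref{chern.autoparallel} combined with the general submanifold principle recalled in the paragraph immediately preceding the statement. The genuine content — the auto-parallelism of the leaves — has already been established, so the only remaining work is to convert ``auto-parallel'' into ``totally geodesic''.

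First I would recall from Theorem \ref{chern.autoparallel} that the leaves of both foliations $\N_\mathfrak{R}$ and $\N_{R^\ast}$ are auto-parallel with respect to the Chern connection $\overast{D}$; concretely, that $\overast{D}_X Y$ is again a section of the relevant nullity distribution whenever $X$ and $Y$ are tangent to a leaf, so that the nullity distribution is stable under $\overast{D}$. This is precisely the assertion that the induced Chern connection on each leaf coincides with the restriction of the ambient connection, equivalently that the second fundamental form of the leaf (the normal component of $\overast{D}_X Y$) vanishes identically.

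Second, I would invoke the general fact that in Finsler geometry every auto-parallel submanifold is totally geodesic, which is the cited result \cite{E.cartan}: the vanishing of the second fundamental form forces every $\overast{D}$-geodesic that is initially tangent to a leaf to remain in that leaf, which is exactly the totally-geodesic property. Applying this to the leaves of $\N_\mathfrak{R}$ and of $\N_{R^\ast}$ yields the claim.

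I do not expect any serious obstacle, since the substantial step is Theorem \ref{chern.autoparallel} itself. The only point requiring care is that the implication is one-directional: because the Chern connection carries non-vanishing hv-torsion, the Riemannian equivalence of ``auto-parallel'' and ``totally geodesic'' breaks down (as the preceding paragraph stresses), so one must use only the forward implication $\text{auto-parallel}\Rightarrow\text{totally geodesic}$ and must not attempt the reverse argument via geodesics.
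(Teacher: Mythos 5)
Your proposal is correct and follows exactly the paper's route: the corollary is obtained by combining Theorem \ref{chern.autoparallel} (auto-parallelism of the leaves) with the one-directional implication that every auto-parallel submanifold is totally geodesic, cited to \cite{E.cartan} in the paragraph preceding the statement. Your cautionary remark about not reversing the implication matches the paper's own discussion of why the Riemannian equivalence fails in the Finsler setting.
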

\begin{thm}\label{coincidenulker}
If $\mathfrak{R}=0$, then the two distributions   $\N_{R^\ast}$ and $\text{Ker}_{R^\ast}$ coincide.
\end{thm}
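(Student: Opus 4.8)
The plan is to prove the double inclusion. One direction, $\N_{R^\ast}\subseteq\text{Ker}_{R^\ast}$, is already established unconditionally in Proposition \ref{chern.nul}(3), so under the hypothesis $\mathfrak{R}=0$ it suffices to prove the reverse inclusion $\text{Ker}_{R^\ast}\subseteq\N_{R^\ast}$. So first I would fix a point $z\in TM$ and take an arbitrary horizontal vector $u\in\text{Ker}_{R^\ast}(z)$, meaning $\overast{R}_z(v,w)u=0$ for all horizontal $v,w$, and aim to show $\overast{R}_z(u,w)=0$ for all horizontal $w$, i.e.\ $u\in\N_{R^\ast}(z)$.

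The key tool is the extra symmetry (\ref{eq.9}), which holds precisely because $\mathfrak{R}=0$: namely $\overast{R}(X,Y,Z,W)=\overast{R}(Z,W,X,Y)$, where $\overast{R}(X,Y,Z,W)=g(\overast{R}(X,Y)Z,JW)$. The plan is to compute, for $u\in\text{Ker}_{R^\ast}(z)$ and arbitrary horizontal $w$, the quantity $g(\overast{R}(u,w)Z,JW)$ for all horizontal $Z,W$, and show it vanishes. Applying (\ref{eq.9}) I would rewrite
\begin{equation*}
g(\overast{R}(u,w)Z,JW)=\overast{R}(u,w,Z,W)=\overast{R}(Z,W,u,w)=g(\overast{R}(Z,W)u,Jw).
\end{equation*}
Since $u\in\text{Ker}_{R^\ast}(z)$, the inner term $\overast{R}(Z,W)u=0$, so the whole expression is zero. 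Because $g$ restricted to the vertical subbundle via $J$ is nondegenerate (it is the metric $\overline g$, and $JW$ ranges over all vertical vectors as $W$ ranges over horizontal vectors), vanishing of $g(\overast{R}(u,w)Z,JW)$ for all $W$ forces $\overast{R}(u,w)Z=0$; and since $Z$ was arbitrary horizontal, $\overast{R}(u,w)=0$ as a map. As $w$ was arbitrary horizontal, this says exactly $u\in\N_{R^\ast}(z)$.

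The main obstacle I anticipate is the nondegeneracy step: one must be sure that $W\mapsto JW$ surjects onto the vertical fibre and that $g(\cdot,J\cdot)=\overline g(J\cdot,J\cdot)$ is nondegenerate there, so that $g(\xi,JW)=0$ for all horizontal $W$ genuinely implies $\xi=0$ whenever $\xi$ is vertical. Here $\overast{R}(u,w)Z$ is automatically vertical (the curvature of a Finsler connection maps into the vertical bundle), so pairing it against all $JW$ and using the metric $\overline g$ of (\ref{metricg}) does pin it down; I would state this explicitly rather than leave it implicit. A secondary point worth a sentence is that the argument is pointwise and therefore yields equality of the distributions $\N_{R^\ast}$ and $\text{Ker}_{R^\ast}$ (not merely of their fibres), which combined with Proposition \ref{chern.nul}(3) completes the coincidence.
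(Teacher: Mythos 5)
Your proposal is correct and follows essentially the same route as the paper's proof: both directions rest on Proposition \ref{chern.nul}(3) for the inclusion $\N_{R^\ast}\subseteq\text{Ker}_{R^\ast}$ and on the pair-exchange symmetry (\ref{eq.9}) (valid because $\mathfrak{R}=0$) plus nondegeneracy of $g$ on the vertical bundle for the reverse inclusion. The only difference is cosmetic — you run the chain of equalities starting from $g(\,\overast{R}(u,w)Z,JW)$ rather than from $\overast{R}(Y,Z)X=0$, and you make explicit the nondegeneracy step that the paper leaves implicit.
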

\begin{proof} By Proposition \ref{chern.nul} (3), we always have $\N_{R^\ast}\subset \text{Ker}_{R^\ast}$. Let $X\in  \Gamma(\text{Ker}_{R^\ast})$ and let $Y,Z,W$ be vector fields on $T M$, then by  (\ref{eq.9}), we have
\begin{eqnarray*}
% \nonumber to remove numbering (before each equation)
  \overast{R}(Y,Z)X=0  &\Longrightarrow& g(\overast{R}(Y,Z)X,JW)=0  \\
   &\Longrightarrow&   \overast{R}(Y,Z,X,W)=0\\
    &\Longrightarrow&   \overast{R}(X,W,Y,Z)=0\\
    &\Longrightarrow&   g(\,\overast{R}(X,W)Y,JZ)=0\\
    &\Longrightarrow&   \overast{R}(X,W)Y=0\\
    &\Longrightarrow&   X\in \Gamma(\N_{R^\ast}),
\end{eqnarray*}
thus $\text{Ker}_{R^\ast}\subset \N_{R^\ast}$.
\end{proof}
\begin{thm}
Let $(M, E)$ be a complete Finsler manifold and $U$ the open subset
of M on which $\mu_{R^\ast}$ takes its minimum. If\, $\mathfrak{R}$ vanishes, then every integral manifold of the nullity
foliation $\mathcal{N}_{R^\ast}$  in $U$ is  complete.
\end{thm}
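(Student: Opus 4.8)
The plan is to deduce the geodesic completeness of a leaf from the completeness of $(M,E)$ by exploiting that the leaves are auto-parallel. First I would record the standing facts on $U$. Since the nullity index $\mu_{R^\ast}$ is lower semicontinuous, the set $U$ on which it attains its global minimum $\mu_0$ is open and $\mu_{R^\ast}\equiv \mu_0$ there; hence, by Corollary \ref{c.i.chern}, $\N_{R^\ast}$ is a smooth completely integrable distribution on $U$, and by Theorem \ref{chern.autoparallel} and Corollary \ref{leaves} each leaf $L$ of the induced foliation is an auto-parallel, hence totally geodesic, submanifold with respect to the Chern connection. Because $\mathfrak{R}=0$ we may moreover invoke Theorem \ref{coincidenulker} and the symmetry (\ref{eq.9}) freely.

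Next I would observe that the auto-parallels of $\overast{D}$ coincide with the geodesics of the canonical spray $S$: by (\ref{chernconn.}) the Chern and Berwald connections differ only by the Landsberg tensor $\mathcal{C}'$, which is semi-basic and satisfies $\mathcal{C}'(X,S)=0$ by (\ref{c(s)}), so the two connections have the same auto-parallels, namely the $S$-geodesics. Consequently a curve in $L$ is a geodesic of the connection induced on $L$ if and only if it is an $S$-geodesic of $M$ that stays in $L$. Since $(M,E)$ is complete, every $S$-geodesic is defined on all of $\Real$. Thus completeness of $L$ reduces to the statement: if $\gamma$ is an $S$-geodesic with $\gamma(0)\in L$ and $\dot\gamma(0)$ tangent to $L$, then $\gamma(t)\in L$ for all $t$.

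For the part of this claim that is easy, let $[0,b)$ be the maximal interval on which $\gamma$ stays in $U$. On $[0,b)$ I would run an open-and-closed argument: whenever $\gamma(t_1)\in U$ and $\dot\gamma(t_1)$ is tangent to the leaf through $\gamma(t_1)$, the total geodesy of that leaf together with the uniqueness of $S$-geodesics forces $\gamma$ to coincide with the leaf geodesic near $t_1$; hence $\gamma$ remains tangent to $\N_{R^\ast}$ and inside one and the same leaf $L$ throughout $[0,b)$. In particular $\dot\gamma(t)\in\N_{R^\ast}(\gamma(t))$ for all $t\in[0,b)$, so along $[0,b)$ every curvature term of the form $\overast{R}(\cdot,\dot\gamma)$ vanishes.

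The crux, and the main obstacle, is to show that $b$ cannot be finite, i.e. that $\gamma$ never escapes $U$. At a finite $b$, lower semicontinuity gives $\mu_{R^\ast}(\gamma(b))\ge\mu_0$, while a $\overast{D}$-parallel nullity frame transported along $\gamma|_{[0,b)}$ extends continuously to $\gamma(b)$ and, by continuity of $\overast{R}$, still spans a $\mu_0$-dimensional nullity space there. If $\mu_{R^\ast}(\gamma(b))>\mu_0$ I would pick an extra nullity vector $w$ at $\gamma(b)$, parallel-transport it backward to a field $W(t)$, and prove that $W(t)$ is already a nullity vector for $t<b$, contradicting the minimality of $\mu_0$ on $U$. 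This propagation is where the work lies: differentiating $\overast{R}(W,Y)Z$ along $\gamma$ and applying the second Bianchi identity (\ref{eq.7}), which collapses to $\mathfrak{S}_{X,Y,Z}\{(\overast{D}_{hX}\overast{R})(Y,Z)\}=0$ precisely because $\mathfrak{R}=0$, one kills the terms carrying $\dot\gamma$ in a curvature slot (using $\dot\gamma\in\N_{R^\ast}$ on $[0,b)$); the surviving cross-terms, which involve covariant derivatives of the nullity direction transverse to $\gamma$, must then be controlled through the conullity (splitting) structure of the foliation so that the components of $\overast{R}(W,\cdot)\cdot$ satisfy a homogeneous linear system along $\gamma$ with vanishing data at $t=b$. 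A Gronwall-type conclusion then yields $\overast{R}(W(t),\cdot)=0$ for $t<b$. Closing up this linear ODE is the delicate step in which the hypothesis $\mathfrak{R}=0$ is indispensable, and it may be carried out exactly as in the Cartan case \cite{ND-cartan}. Once $b=\infty$ is established, $\gamma$ stays in $U$, stays tangent to $\N_{R^\ast}$, and therefore remains in $L$ for all time; since $\gamma$ is globally defined by the completeness of $M$, the leaf $L$ is complete.
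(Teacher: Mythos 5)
Your proposal is correct and follows essentially the same route as the paper's proof: reduce to the total geodesy of the leaves, extend the geodesic using completeness of $M$, and obtain a contradiction at the first exit point from $U$ by transporting a parallel frame, invoking the Bianchi identity (\ref{eq.7}) (which collapses to a cyclic-sum-zero identity when $\mathfrak{R}=0$) together with Theorem \ref{coincidenulker}, and solving the resulting homogeneous linear ODE with vanishing data at the endpoint --- precisely the system (\ref{dash}) that the paper writes down. One minor slip: $\mu_{R^\ast}$ is upper, not lower, semicontinuous (the rank of a continuous family of linear maps is lower semicontinuous, so the kernel dimension is upper semicontinuous), though this does not affect any of your conclusions since $\mu_0$ is the global minimum.
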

\begin{proof}The proof is inspired by \cite{akbar.null.}, taking into account the fact that the two spaces $\N_{R^\ast}(z)$ and $\N_{R^\ast}(x)$, $x=\pi(z)$,  are isomorphic.
Let $N$ be an integral manifold of the nullity foliation $\mathcal{N}_{R^\ast}$ in $U$. To prove that $N$ is complete, it suffices to show that every geodesic $\gamma : [0, c) \rightarrow N $ on $N$ can be extended to a geodesic $ \widetilde{\gamma}: [0,\infty)\rightarrow N $ on $N$. Suppose that such a geodesic extension $ \widetilde{\gamma}$ does not exist. As $N$ is totally geodesic, by Corollary \ref{leaves}, $\gamma$ is a geodesic on M and thus has  a geodesic extension $ \widetilde{\gamma}~:~ [0,\infty)\rightarrow~M $ such that $\gamma=\widetilde{\gamma}\cap N$.  It follows that $p:=\widetilde{\gamma}(c)\notin U$. Let $p_0:=\gamma(0)=\widetilde{\gamma}(0)$ and set $r_0:=\mu_{R^\ast}(p_0)$, the dimension of the nullity space $\mathcal{N}_{R^\ast}(p_0)$. Since  $\mu_{R^\ast}$  is positive and minimal  on $U$, then  $\mu_{R^\ast}(p)>r_0>0$. Now, consider a basis $B=\{e_1,...,  e_{r_0},e_{r_0+1},...,e_n\}$ for $T_{p_0}M$ such that $\{e_1, ..., e_{r_0}\}$ is a basis for $\N_{R^\ast}(p_0)$ and $e_1$ is tangent to $\gamma$ at $p_0=\gamma(0)$. Using the system of differential
equations $$\frac{\overast{D} F_i}{dt}=0,\quad F_i(0)=e_i, \quad i=1,2,...,n,$$
the basis $B$ can be translated into a parallel frame $(F_1, ..., F_{r_0},F_{r_0+1},...,F_n)$ along $\widetilde{\gamma}$. Then $(F_1, ..., F_{r_0})$ is a basis for the nullity space at every point $\widetilde{\gamma}(t)$ in $U\cap V$ for some neighborhood $V$ of $\widetilde{\gamma}(t)$ on $M$. Since $\mu_{R^\ast}(p)>r_0$, there is a vector field $F_a$ along $\widetilde{\gamma}$, for a fixed integer $a$ in the range $r_0+1,...,n$, such that for every $t\in[0,c)$, we have
\begin{equation}\label{contradiction-1}
F_a(\gamma(t))\notin\mathcal{N}_{R^\ast}(\gamma(t)), \,\,\, \,\,  F_a(p)\in \mathcal{N}_{R^\ast}(p).
\end{equation}

Now, let $\widehat{\widetilde{\gamma}}$ be the natural lift of $\widetilde{\gamma}$ to $\T M$ and $\{\widehat{F}_1, ..., \widehat{F}_{r_0},\widehat{F}_{r_0+1},...,\widehat{F}_n\}$ the basis of $H_{\widehat{\widetilde{\gamma}}(t)}TM$ such that $\pi_\ast(\widehat{F}_i)=F_i$. Let $\phi^h_{ijk}$ be the functions defined by
\begin{equation}\label{complete}
\overast{R}(\widehat{F}_i,\widehat{F}_j)\widehat{F}_k=\phi^h_{ijk}\,
\frac{\partial}{\partial y^h}.
\end{equation}
 By  (\ref{eq.7}), taking into account  that $\mathfrak{R}=0$, we have
$$(\,\overast{D}_{hX}\,\overast{R})(Y,Z)+(\,\overast{D}_{hY}\,\overast{R})(Z,X)+(\,\overast{D}_{hZ}\,\overast{R})(X,Y)=0.$$
Plugging $\widehat{F}_1$, $\widehat{F}_i$ and  $\widehat{F}_j$  instead of $X$, $Y$ and $Z$, where $i,j=r_0+1,...,n$,  we get
$$(\,\overast{D}_{\widehat{F}_1}\,\overast{R})(\widehat{F}_i,\widehat{F}_j)+(\,\overast{D}_{\widehat{F}_i}\,\overast{R})(\widehat{F}_j,\widehat{F}_1)
+(\,\overast{D}_{\widehat{F}_j}\,\overast{R})(\widehat{F}_1,\widehat{F}_i)=0.$$
Since $\widehat{F}_1\in \N_{R^\ast}$ and\, $\overast{T}(hX,hY)=\mathfrak{R}(X,Y)=0$, the last equality  takes the form
$$\,\overast{D}_{\widehat{F}_1}\,\overast{R}(\widehat{F}_i,\widehat{F}_j)+\,\overast{R}(\widehat{F}_j,[\widehat{F}_1,\widehat{F}_i])
+\,\,\overast{R}(\widehat{F}_i,[\widehat{F}_j,\widehat{F}_1])=0.$$
Applying the above equation on $\widehat{F}_a$, we get
\begin{equation}\label{drrr}
\,\overast{D}_{\widehat{F}_1}\,\overast{R}(\widehat{F}_i,\widehat{F}_j)\widehat{F}_a
+\,\overast{R}(\widehat{F}_j,[\widehat{F}_1,\widehat{F}_i])\widehat{F}_a+\,\,\overast{R}(\widehat{F}_i,[\widehat{F}_j,\widehat{F}_1])\widehat{F}_a=0.
\end{equation}
Since, $[\widehat{F}_1,\widehat{F}_i]$ is horizontal, it can be written in the form $[\widehat{F}_1,\widehat{F}_i]=\xi^k_{1i}\widehat{F}_k+\xi^\mu_{1i}\widehat{F}_\mu$, where $k=r_0+1,...,n$ and $\mu=1,...,r_0$. Consequently, by (\ref{complete}) and (\ref{drrr}), noting that $\widehat{F}_\mu$ are null vector fields, we get
\begin{equation}\label{dash}
  (\phi^h_{ija})'+\xi^k_{1i}\,\phi^h_{jka}-\xi^k_{1j}\,\phi^h_{ika}=0
\end{equation}
Since $F_a$ is  a nullity vector field at $p$, then for the fixed index $a$, $\phi^h_{lma}(p) = 0$, where $l,m = r_0+1, ..., n$. Hence, the differential equations (\ref{dash}) with the  initial condition $\phi^h_{lma}(p) = 0 $  imply that the functions $\phi^h_{lma}$ vanish identically.  As $\mathfrak{R}=0$, Theorem \ref{coincidenulker} and (\ref{complete}) give rise to
\begin{equation}\label{final}
  F_a(\gamma(t))\in \mathcal{N}_{R^\ast}(\gamma(t)),\,\, \text{for all}\,\, t\in [0,c]
\end{equation}
Now (\ref{contradiction-1}) and (\ref{final}) lead to a contradiction.
Consequently,  $\gamma$  can be extended to a geodesic $\widetilde{\gamma} : [0,\infty) \longrightarrow N$.
\end{proof}

%%%%%%%%%%%%%%%%%%%%%%%%%%%%%%%%%%%%%%%%%%%%%%%%%%%%%%%%%%%%%%%%%%%%
\Section{Nullity distribution of the Chern hv-curvature}
In this section we  investigate the nullity distribution of the hv-curvature \, $\overast{P}$ of the Chern connection. We show, by a counterexample,  that the nullity  distribution $\N_{P^\ast}$  is not   completely integrable. We find a sufficient condition for $\N_{P^\ast}$ to be completely integrable.

\begin{defn} Let \, $\overast{P}$ be the hv-curvature of the  Chern connection.
The  nullity space  of \, $\overast{P}$ at a point $z\in TM$ is a subspace of $H_z(TM)$ defined by
$$\mathcal{N}_{P^\ast}(z):=\{v\in H_z(TM) | \,\,  \overast{P}_z(v,w)=0, \, \,\text{for all}\,\, w\in H_z(TM)\}.$$
The dimension of $\mathcal{N}_{P^\ast}(z)$, denoted by $\mu_{P^\ast}(z)$, is the nullity index   of \, $\overast{P}$ at $z$.
\end{defn}
\begin{prop}\label{pchern.nullity}The  nullity distribution  of \, $\overast{P}$ satisfies:
\begin{enumerate}
\item[\textup{(1)}]$ \N_{P^\ast}\neq\phi$.

 \item[\textup{(2)}] If $X\in \Gamma(\N_{P^\ast})$, then $[C,X]\in \Gamma(\N_{P^\ast})$.

  \item[\textup{(3)}] If $X\in \Gamma(\N_{P^\ast})$, then $\mathcal{C}'(X,Y)=0,\, \,\text{for all}\,\, Y\in \mathfrak{X}^h(\T M).$

  \item[\textup{(4)}] If $\mu_{P^\ast}=n$, then\, $\N_{{R^\ast}}=\N_{R^\circ}$,

\end{enumerate}
where $\N_{R^\circ}$ is the nullity distribution of the h-curvature of the Berwald connection \cite{Nabil.1}.
\end{prop}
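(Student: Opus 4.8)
The plan is to extract parts (3) and (4) directly from the special–argument identity (\ref{eq.4}), to treat (2) by the homogeneity of $\overast{P}$ along the Liouville field $C$ in exactly the way Proposition \ref{chern.nul}(5) treats $\overast{R}$, and to settle (1) by the standing nonvanishing convention on the nullity index. I would carry these out in the order (3), (4), (2), (1), since (3) is the structural fact the others lean on. For (3): if $X\in\Gamma(\N_{P^\ast})$ then $\overast{P}(X,w)=0$ as an endomorphism for every horizontal $w$; evaluating this endomorphism on the canonical spray $S$ and invoking (\ref{eq.4}), $\overast{P}(X,w)S=\mathcal{C}'(X,w)$, forces $\mathcal{C}'(X,Y)=0$ for all $Y\in\mathfrak{X}^h(\T M)$. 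Part (4) is then immediate: $\mu_{P^\ast}=n$ means $\N_{P^\ast}(z)=H_z(TM)$ at every point, i.e. $\overast{P}\equiv 0$; substituting $\overast{P}=0$ into (\ref{eq.4}) (equivalently, applying (3) to every $X$) gives $\mathcal{C}'=0$. Comparing the defining formulas (\ref{chernconn.}) and (\ref{berwaldconn.}), $\mathcal{C}'=0$ makes the Chern and Berwald connections literally coincide, $\overast{D}=\overcirc{D}$; hence their $h$-curvatures agree, $\overast{R}=\overcirc{R}$, and therefore their nullity distributions agree, $\N_{R^\ast}=\N_{R^\circ}$.

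For (2) I would transcribe the argument of Proposition \ref{chern.nul}(5). Using that $\overast{P}$ is positively homogeneous along $C$, so that $\overast{D}_C\overast{P}$ is a scalar multiple of $\overast{P}$ (cf. \cite{Chern}), expand $(\overast{D}_C\overast{P})(X,Y)$ for $X\in\Gamma(\N_{P^\ast})$. Since $X$ is a nullity vector, $\overast{P}(X,\cdot)=0$, so the left-hand side $(\overast{D}_C\overast{P})(X,Y)$ (a multiple of $\overast{P}(X,Y)$) and every term of its Leibniz expansion except one vanish, leaving $\overast{P}(\overast{D}_C X,Y)=0$ for all $Y$. Rewriting $\overast{D}_C X$ through (\ref{chernconn.}) yields $\overast{P}([C,X],Y)=0$, and the homogeneity of the horizontal projector, $[C,h]=0$, gives $[C,hX]=h[C,X]$, so $[C,X]$ is horizontal; hence $[C,X]\in\Gamma(\N_{P^\ast})$. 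Finally (1), the least substantial point: nonemptiness of $\N_{P^\ast}$ is part of the convention — inherited from Definition \ref{nul.chern} for $\overast{R}$ — that the nullity index is a nonzero constant, so that $z\mapsto\N_{P^\ast}(z)$ is a genuine (nonempty) distribution.

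I expect the only real friction to lie in (2): one must confirm the precise homogeneity behaviour of $\overast{P}$ along $C$ and, more delicately, justify the passage from $\overast{P}(\overast{D}_C X,Y)=0$ to the bracket form $\overast{P}([C,X],Y)=0$, i.e. that $\overast{D}_C X$ corresponds to $[C,X]$ under (\ref{chernconn.}), with $[C,h]=0$ then supplying the horizontality. Both points are already settled for $\overast{R}$ in \cite{Chern}, so they should transcribe verbatim to $\overast{P}$ and no essentially new computation is needed; parts (3), (4) and (1) are one-line consequences of (\ref{eq.4}) and the standing conventions.
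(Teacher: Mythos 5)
The paper states this proposition without supplying any proof, so there is nothing to compare against line by line; judged on its own, your argument is correct and is clearly the intended one. Parts (3) and (4) are exactly as the surrounding text suggests: the paper's remark immediately after the proposition derives $\mathcal{C}'=0$ from part (3) via (\ref{eq.4}), and your chain $\mu_{P^\ast}=n\Rightarrow\overast{P}=0\Rightarrow\mathcal{C}'=0\Rightarrow\overast{D}=\overcirc{D}\Rightarrow\overast{R}=\overcirc{R}$ is the only reasonable route to (4), using that $\overast{P}$ is semi-basic so that vanishing on horizontal arguments is full vanishing. For (2), transcribing Proposition \ref{chern.nul}(5) is right; the one substantive input you correctly isolate is that $\overast{D}_C\overast{P}$ is proportional to $\overast{P}$ (in fact $\overast{D}_C\overast{P}=-\overast{P}$, since the hv-curvature is $(-1)^+$-homogeneous while $\overast{D}_C\overast{R}=0$), and the passage from $\overast{P}(\overast{D}_CX,Y)=0$ to $\overast{P}([C,X],Y)=0$ together with $[C,h]=0$ is word-for-word the step the paper already takes for $\overast{R}$. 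On (1), your honesty is warranted: unlike the $\overast{R}$ case one cannot exhibit $S$ as a nullity vector here, since by the theorem that follows $S\in\Gamma(\N_{P^\ast})$ precisely in the Landsbergian case; note that (\ref{eq.4}) gives $\overast{P}(X,S)Z=0$, i.e.\ $S$ annihilates $\overast{P}$ only in the \emph{second} slot, which is not the slot used in the definition of $\N_{P^\ast}$. So (1) is either the trivial statement that the nullity space contains the zero vector or, as you say, part of the standing convention on the nullity index; either reading is consistent with the paper, which offers no argument for it.
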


A Finsler manifold is said to be Landsbergian if the Landsberg tensor $\mathcal{C}'$ vanishes or, equivalently, $P=0$ \cite{szilasi}.
 If  the  nullity index    $\mu_{P^\ast}$ takes its maximum, then by Proposition \ref{pchern.nullity} (3),   $\mathcal{C}'=0.$
  Consequently, a Finsler manifold $(M,E)$ is Landsbergian  if  the  nullity index $\mu_{P^\ast}$  achieves  its maximum.

\begin{thm}
A Finsler manifold $(M,E)$ is Landsbergian  if and only if the canonical spray $S$  is a nullity vector field for the  the  distribution  $\N_{P^\ast}$.
\end{thm}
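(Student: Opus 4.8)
The plan is to prove both directions using the characterization of the Landsberg condition via the Landsberg tensor $\mathcal{C}'$ together with the identities (\ref{eq.4}) relating $\overast{P}$ to $\mathcal{C}'$ when one of the arguments is a semispray. The key observation is that the canonical spray $S$ is a genuine horizontal vector field (it is $h$-horizontal since $G=[J,S]$ yields $hS=S$), so the statement ``$S$ is a nullity vector field for $\N_{P^\ast}$'' means precisely that $\overast{P}(S,Y)=0$ for every horizontal $Y$, i.e.\ $\overast{P}(S,Y)Z=0$ for all $Y,Z$.

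For the forward direction, I would assume $(M,E)$ is Landsbergian, so $\mathcal{C}'=0$. By the defining formula (\ref{eq.4}), namely $\overast{P}(S,Y)X=\mathcal{C}'(X,Y)$, the vanishing of $\mathcal{C}'$ immediately gives $\overast{P}(S,Y)X=0$ for all $X,Y$. Hence $\overast{P}(S,Y)=0$ for every $Y\in\mathfrak{X}^h(\T M)$, which is exactly the condition that $S\in\Gamma(\N_{P^\ast})$. This direction is essentially a one-line consequence of (\ref{eq.4}).

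For the converse, I would assume $S\in\Gamma(\N_{P^\ast})$, so $\overast{P}(S,Y)Z=0$ for all $Y,Z$. Reading the same identity (\ref{eq.4}) in the other order, $\overast{P}(S,Y)X=\mathcal{C}'(X,Y)$, the hypothesis forces $\mathcal{C}'(X,Y)=0$ for all $X$ and all horizontal $Y$. Since $\mathcal{C}'$ is semi-basic and symmetric, and vanishes whenever either argument is $S$ by (\ref{c(s)}), vanishing on horizontal arguments suffices to conclude $\mathcal{C}'=0$ identically; equivalently one may invoke Proposition \ref{pchern.nullity}(3) applied to $X=S$, which yields $\mathcal{C}'(S,Y)=0$ but more usefully combines with the full identity to give vanishing of $\mathcal{C}'$ on all pairs. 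Thus $(M,E)$ is Landsbergian.

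The only delicate point, and the step I would treat with a little care, is ensuring that ``$\overast{P}(S,Y)=0$ for all horizontal $Y$'' really recovers $\mathcal{C}'$ on \emph{all} vector-field arguments and not merely on horizontal ones. Here one uses that $\mathcal{C}'$ is semi-basic, so $\mathcal{C}'(X,Y)$ depends only on $JX$ and $JY$ (equivalently on the horizontal parts), together with (\ref{c(s)}) which handles the Liouville direction; combining these reduces the general case to the horizontal one already obtained. Everything else follows formally from (\ref{eq.4}), so the proof is short, with the main conceptual content being the correct interpretation of $S$ as a horizontal nullity vector field.
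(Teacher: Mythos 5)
Your proof is correct and follows essentially the same route as the paper: both directions are read off directly from the identity $\overast{P}(S,Y)X=\mathcal{C}'(X,Y)$ in (\ref{eq.4}). The extra care you take in the converse about passing from horizontal arguments to arbitrary ones via semi-basicity of $\mathcal{C}'$ is a detail the paper's chain of equivalences glosses over, but it does not change the substance of the argument.
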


\begin{proof}
By  (\ref{eq.4}),   we have
\begin{eqnarray*}
% \nonumber to remove numbering (before each equation)
   (M,E)\,\text{is Landsbergian}&\Longleftrightarrow&\mathcal{C}'=0 \\
     &\Longleftrightarrow& \overast{P}(X,Y)S=0\,\, \,\text{for all}\,\, X,Y \in \cppp \\
  &\Longleftrightarrow&  \overast{P}(S,Y)X=0 \,\,\,\text{for all}\,\, X,Y \in \cppp\\
 &\Longleftrightarrow& S\in \Gamma(\N_{P^\ast}),
\end{eqnarray*}
as was to be shown.
\end{proof}

\begin{rem}
The above theorem shows that the canonical spray $S$ does not belong to the nullity distribution $\N_{P^\ast}$ except in the  Landsbergian case.
 This is in contrast to the case of Cartan connection, where  the canonical spray always belongs to the nullity  distribution of the Cartan  hv-curvature $P$.
\end{rem}

The nullity distribution $\N_{P^\ast}$ is not  completely integrable in general, as is illustrated  by the following  example.

\bigskip

\begin{example}
Let
$U=\{(x^1,x^2,x^3;y^1,y^2,y^3)\in\mathbb{R}^3\times \mathbb{R}^3:\,y^1, y^2,y^3\neq 0, y^3\neq 4y^2 \}\subset TM$, where $M:= \mathbb{R}^3$. Define  $F$ on $U$ by
$$F(x,y):=\sqrt [4]{{{ e}^{-{ x^1 x^2}}}{{ (y^1)}}^{2}{{ (y^3)}}^{2}{{e}^{-{\frac {{ y^3}}{{ y^2}}}}}}.$$
By Maple program and NF-package we can perform the following calculations.
\bigskip

\begin{maplegroup}
\begin{mapleinput}
\mapleinline{active}{1d}{F0 := (exp(-x1x2)*y1\symbol{94}2*y3\symbol{94}2*exp(-y3/(y2)))\symbol{94}(1/2);
}{}
\end{mapleinput}
\mapleresult
\begin{maplelatex}
\mapleinline{inert}{2d}{F0 := sqrt(x2^2*y1^4+y2^4+y3^4+y4^4)}{\[ F0\, := \,\sqrt{{{\rm e}^{-{\it x1 x2}}}{{\it y1}}^{2}{{\it y3}}^{2}{{\rm e}^{-{\frac {{\it y3}}{{\it y2}}}}}}\]}
\end{maplelatex}
\end{maplegroup}

\begin{maplegroup}
\begin{Maple Normal}{
{{\bf Barthel connection}}}\end{Maple Normal}

\end{maplegroup}
\begin{maplegroup}
\begin{mapleinput}
\mapleinline{active}{1d}{show(N[i,-j]);
}{}
\end{mapleinput}
\mapleresult
\begin{maplelatex}
\mapleinline{inert}{2d}{}{}{\[ N^{{\it x1} }_{{\it x1} }=-\frac{1}{2} {\it x2 \it y1}\hspace{1cm} N^{{\it x2} }_{{\it x2} }=-\frac{4{\it x1} {\it y2}^{3}\left(3{\it y2} -{\it y3}\right)}{\left(-{\it y3}+4{\it y2} \right)^{2}{\it y3}}\hspace{1cm} N^{{\it x2} }_{{\it x3} }=\frac{2 {\it x1}{\it y2}^{4}\left(2{\it y2}-{\it y3} \right)}{\left(-{\it y3} +4{\it y2}\right)^{2}{\it y3}^{2}}\]}
\end{maplelatex}
\mapleresult
\begin{maplelatex}
\mapleinline{inert}{2d}{}{}{\[ N^{{\it x3} }_{{\it x2} }=-\frac{{\it x1}{\it y3} \left(2{\it y2}-{\it y3} \right) {\it y2}}{\left(-{\it y3} +4{\it y2}\right)^{2}}\hspace{1cm} N^{{\it x3} }_{{\it x3} }=-\frac{2 {\it x1}{\it y2}^{3}}{\left(-{\it y3}+4{\it y2} \right)^{2}}\]}
\end{maplelatex}
\end{maplegroup}

\begin{maplegroup}
\begin{Maple Normal}{
{\bf Chern hv-curvature\, $\overast{P}$:}}\end{Maple Normal}

\end{maplegroup}
\begin{maplegroup}
\begin{mapleinput}
\mapleinline{active}{1d}{definetensor(Pchern[i,-h,-j,-k] = tddiff(Gammastar[i,-h,-j],
 Y[k]),symm[2,3]); }{\[\]}
\end{mapleinput}
\mapleresult
\begin{mapleinput}
\mapleinline{active}{1d}{show(Pchern[h, -i, -j, -k]); }{\[\]}
\end{mapleinput}
\mapleresult
\begin{maplelatex}
\mapleinline{inert}{2d}{}{\[ {\it Pchern}^{{\it x2} }_{{\it x2 x2 x2} }=-\frac{12 x1{\it y2}\left(-{\it y3}^{3}+8{\it y3}^{2}{\it y2}-24{\it y2}^{2}{\it y3} +24{\it y2}^{3}\right)}{{\it y3} \left(-{\it y3} +4{\it y2}\right)^{4}}\]}
\end{maplelatex}
\mapleresult
\begin{maplelatex}
\mapleinline{inert}{2d}{}{\[ {\it Pchern}^{{\it x2} }_{{\it x2 x2 x3} }=\frac{12x1{\it y2}^{2}\left(-{\it y3}^{3}+8{\it y3}^{2}{\it y2} -24{\it y2}^{2}{\it y3} +24{\it y2}^{3}\right)}{{\it y3}^{2}\left(-{\it y3} +4{\it y2} \right)^{4}}\]}
\end{maplelatex}
\mapleresult
\begin{maplelatex}
\mapleinline{inert}{2d}{}{\[ {\it Pchern}^{{\it x3} }_{{\it x2 x2 x2} }=\frac{6x1{\it y3} \left({\it y3}^{2}-4{\it y2} {\it y3} +8{\it y2}^{2}\right)}{\left(-{\it y3} +4{\it y2} \right)^{4}}\]}
\end{maplelatex}
\mapleresult
\begin{maplelatex}
\mapleinline{inert}{2d}{}{\[ {\it Pchern}^{{\it x3} }_{{\it x2 x2x3} }=-\frac{6x1{\it y2} \left({\it y3}^{2}-4{\it y2} {\it y3} +8{\it y2}^{2}\right)}{\left(-{\it y3} +4{\it y2} \right)^{4}}\]}
\end{maplelatex}
\mapleresult
\begin{maplelatex}
\mapleinline{inert}{2d}{}{\[ {\it Pchern}^{{\it x2} }_{{\it x2 x3 x2} }=\frac{6x1{\it y2}^{2}\left(-28{\it y2}^{2}{\it y3} +32{\it y2}^{3}+8{\it y3}^{2}{\it y2} -{\it y3}^{3}\right)}{{\it y3}^{2}\left(-{\it y3} +4{\it y2} \right)^{4}}\]}
\end{maplelatex}
\mapleresult
\begin{maplelatex}
\mapleinline{inert}{2d}{}{\[ {\it Pchern}^{{\it x2} }_{{\it x2 x3 x3} }=-\frac{6x1{\it y2}^{3}\left(-28{\it y2}^{2}{\it y3} +32{\it y2}^{3}+8{\it y3}^{2}{\it y2} -{\it y3}^{3}\right)}{{\it y3}^{3}\left(-{\it y3} +4{\it y2} \right)^{4}}\]}
\end{maplelatex}
\mapleresult
\begin{maplelatex}
\mapleinline{inert}{2d}{}{}{\[ {\it Pchern}^{{\it x3} }_{{\it x2 x3 x2} }=-\frac{12x1{\it y2}^{2}{\it y3} }{\left(-{\it y3} +4{\it y2} \right)^{4}}\hspace{2.6cm} {\it Pchern}^{{\it x3} }_{{\it x2 x3 x3} }=\frac{12x1{\it y2}^{3}}{\left(-{\it y3} +4{\it y2} \right)^{4}}\]}
\end{maplelatex}
\mapleresult
\begin{maplelatex}
\mapleinline{inert}{2d}{}{}{\[ {\it Pchern}^{{\it x2} }_{{\it x3 x3 x2} }=-\frac{48x1{\it y2}^{5}\left(2{\it y2} -{\it y3} \right)}{{\it y3}^{3}\left(-{\it y3} +4{\it y2} \right)^{4}}\hspace{2cm} {\it Pchern}^{{\it x2} }_{{\it x3 x3 x3} }=\frac{48x1{\it y2}^{6}\left(2{\it y2} -{\it y3} \right)}{{\it y3}^{4}\left(-{\it y3} +4{\it y2} \right)^{4}}\]}
\end{maplelatex}
\mapleresult
\begin{maplelatex}
\mapleinline{inert}{2d}{}{}{\[ {\it Pchern}^{{\it x3} }_{{\it x3 x3 x2} }=-\frac{6x1{\it y2}^{2}\left(-8{\it y2} {\it y3} +8{\it y2}^{2}+{\it y3}^{2}\right)}{{\it y3} \left(-{\it y3} +4{\it y2} \right)^{4}}\hspace{.6cm}{\it Pchern}^{{\it x3} }_{{\it x3 x3 x3} }=\frac{6 x1{\it y2}^{3}\left(-8{\it y2}{\it y3} +8{\it y2}^{2}+{\it y3}^{2}\right)}{{\it y3}^{2}\left(-{\it y3} +4{\it y2}\right)^{4}}\]}
\end{maplelatex}
\end{maplegroup}
\begin{maplegroup}
\begin{Maple Normal}{
{\bf  \, $\overast{P}$-nullity vectors:}}\end{Maple Normal}

\end{maplegroup}
\begin{maplegroup}
\begin{mapleinput}
\mapleinline{active}{1d}{definetensor(PchernW[h, -i, -k] = Pchern[h, -i, -j, -k]*w[j]); }{\[\]}
\end{mapleinput}
\end{maplegroup}
\begin{maplegroup}
\begin{mapleinput}
\mapleinline{active}{1d}{show(PchernW[h, -i, -k]); }{\[\]}
\end{mapleinput}
\mapleresult
\begin{maplelatex}
\mapleinline{inert}{2d}{}{\[ {\it PchernW}^{{\it x2} }_{{\it x2  x2} }=-\frac{12x1 {\it y2}\left(8{\it y3}^{2}{\it y2}-{\it y3}^{3}-24{\it y2}^{2}{\it y3} +24{\it y2}^{3}\right)w^{{\it x2} }}{{\it y3} \left(-{\it y3} +4{\it y2}\right)^{4}}+\frac{6x1 {\it y2}^{2}\left(32{\it y2}^{3}+8{\it y3}^{2}{\it y2}-28{\it y2}^{2}{\it y3} -{\it y3}^{3}\right)w^{{\it x3} }}{{\it y3}^{2}\left(-{\it y3} +4{\it y2}\right)^{4}}\]}
\end{maplelatex}
\mapleresult
\begin{maplelatex}
\mapleinline{inert}{2d}{}{\[ {\it PchernW}^{{\it x2} }_{{\it x2  x3} }=\frac{12x1{\it y2}^{2}\left(8{\it y3}^{2}{\it y2}-{\it y3}^{3} -24{\it y2}^{2}{\it y3} +24{\it y2}^{3}\right)w^{{\it x2} }}{{\it y3}^{2}\left(-{\it y3} +4{\it y2} \right)^{4}}-\frac{6x1{\it y2}^{3}\left(32{\it y2}^{3}-28{\it y2}^{2}{\it y3}+8{\it y3}^{2}{\it y2} -{\it y3}^{3}\right)w^{{\it x3} }}{{\it y3}^{3}\left(-{\it y3} +4{\it y2} \right)^{4}}\]}
\end{maplelatex}
\mapleresult
\begin{maplelatex}
\mapleinline{inert}{2d}{}{\[ {\it PchernW}^{{\it x2} }_{{\it x3  x2} }=\frac{6x1{\it y2}^{2}\left(-28{\it y2}^{2}{\it y3} +32{\it y2}^{3}+8{\it y3}^{2}{\it y2} -{\it y3}^{3}\right)w^{{\it x2} }}{{\it y3}^{2}\left(-{\it y3} +4{\it y2} \right)^{4}}-\frac{48x1{\it y2}^{5}\left(2{\it y2} -{\it y3} \right)w^{{\it x3} }}{{\it y3}^{3}\left(-{\it y3} +4{\it y2} \right)^{4}}\]}
\end{maplelatex}
\mapleresult
\begin{maplelatex}
\mapleinline{inert}{2d}{}{\[ {\it PchernW}^{{\it x2} }_{{\it x3  x3} }=-\frac{6{\it y2}^{3}\left(-28x1{\it y2}^{2}{\it y3} +32{\it y2}^{3}+8{\it y3}^{2}{\it y2} -{\it y3}^{3}\right)w^{{\it x2} }}{{\it y3}^{3}\left(-{\it y3} +4{\it y2} \right)^{4}}+\frac{48x1{\it y2}^{6}\left(2{\it y2} -{\it y3} \right)w^{{\it x3} }}{{\it y3}^{4}\left(-{\it y3} +4{\it y2} \right)^{4}}\]}
\end{maplelatex}
\mapleresult
\begin{maplelatex}
\mapleinline{inert}{2d}{}{\[ {\it PchernW}^{{\it x3} }_{{\it x2  x2} }=\frac{6x1{\it y3} \left({\it y3}^{2}-4{\it y2} {\it y3} +8{\it y2}^{2}\right)w^{{\it x2} }}{\left(-{\it y3} +4{\it y2} \right)^{4}}-\frac{12x1{\it y2}^{2}{\it y3} w^{{\it x3} }}{\left(-{\it y3} +4{\it y2} \right)^{4}}\]}
\end{maplelatex}
\mapleresult
\begin{maplelatex}
\mapleinline{inert}{2d}{}{\[ {\it PchernW}^{{\it x3} }_{{\it x2  x3} }=-\frac{6x1{\it y2} \left({\it y3}^{2}-4{\it y2} {\it y3} +8{\it y2}^{2}\right)w^{{\it x2} }}{\left(-{\it y3} +4{\it y2} \right)^{4}}+\frac{12x1{\it y2}^{3}w^{{\it x3} }}{\left(-{\it y3} +4{\it y2} \right)^{4}}\]}
\end{maplelatex}
\mapleresult
\begin{maplelatex}
\mapleinline{inert}{2d}{}{\[ {\it PchernW}^{{\it x3} }_{{\it x3  x2} }=-\frac{12x1{\it y2}^{2}{\it y3} w^{{\it x2} }}{\left(-{\it y3} +4{\it y2} \right)^{4}}-\frac{6x1{\it y2}^{2}\left(-8{\it y2} {\it y3} +8{\it y2}^{2}+{\it y3}^{2}\right)w^{{\it x3} }}{{\it y3} \left(-{\it y3} +4{\it y2} \right)^{4}}\]}
\end{maplelatex}
\mapleresult
\begin{maplelatex}
\mapleinline{inert}{2d}{}{\[ {\it PchernW}^{{\it x3} }_{{\it x3  x3} }=\frac{12x1 {\it y2}^{3}w^{{\it x2} }}{\left(-{\it y3} +4{\it y2}\right)^{4}}+\frac{6x1 {\it y2}^{3}\left(-8{\it y2}{\it y3} +8{\it y2}^{2}+{\it y3}^{2}\right)w^{{\it x3} }}{{\it y3}^{2}\left(-{\it y3} +4{\it y2}\right)^{4}}\]}
\end{maplelatex}
\end{maplegroup}

\bigskip

Putting ${\it PchernW}^{\it h }_{\it ij }=0$,  we get a system of algebraic equations. This system has a  solution if  $y_3=2y_2$ and $x^1>0$: $W^1=s$, $W^2=t$, $W^3=2t$, $
 \, s, t\in \mathbb{R}$.
Hence, a \,$\overast{P}$-nullity vector must have the form  $W=sh_1+t(h_2+2h_3)$, where the horizontal basis vector fields $h_1,h_2,h_3$ are given by $h_1=\frac{\partial}{\partial x_1}+\frac{x_2y_1}{2}\frac{\partial}{\partial y_1}$, $h_2=\frac{\partial}{\partial x_2}+\frac{x_1y_2}{2}\frac{\partial}{\partial y_2}$, $h_3=\frac{\partial}{\partial x_3}+\frac{x_1y_2}{2}\frac{\partial}{\partial y_3}$. Now, take $X,Y\in\N_{P^\ast}$ such that  $X=h_1$, $Y=h_2+2h_3$.
   Hence, the bracket $[X,Y]=[h_1,h_2+2h_3]= -\frac{y_1}{2}\frac{\partial}{\partial y_1}+\frac{y_2}{2}\frac{\partial}{\partial y_2}+\frac{y_2}{2}\frac{\partial}{\partial y_3}$   is vertical and,  consequently, $\N_{P^\ast}$ is not completely integrable.
\end{example}

\begin{thm}\label{p.is.c.i.}
Let  $\mu_{P^\ast}$ be constant on an open subset $U$ of $TM$.  The nullity distribution $\N_{P^\ast}$ is completely integrable on $U$ if and only if\,  $ \mathfrak{R}(X,Y)=0 \,\, \text{and}\,\, (\,\overast{D}_{JZ}\,\,\overast{R})(X,Y)=~0$, for all $ X,Y \in \Gamma(\N_{P^\ast})$.
\end{thm}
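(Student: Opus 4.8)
The plan is to reduce complete integrability to involutivity via the Frobenius theorem, and then to translate the closure of $\Gamma(\N_{P^\ast})$ under the Lie bracket into the two stated conditions by splitting each bracket into its horizontal and vertical components. Since $\mu_{P^\ast}$ is constant on $U$, $\N_{P^\ast}$ is a genuine (horizontal) distribution there, and $\N_{P^\ast}$ is completely integrable on $U$ if and only if $[X,Y]\in\Gamma(\N_{P^\ast})$ for all $X,Y\in\Gamma(\N_{P^\ast})$.

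First I would take $X,Y\in\Gamma(\N_{P^\ast})$ and, since these are horizontal, use the bracket formula (\ref{chern.[]}) to write
\[
[X,Y]=h(\overast{D}_{hX}Y-\overast{D}_{hY}X)-\mathfrak{R}(X,Y),
\]
where the first summand is horizontal and $\mathfrak{R}(X,Y)$ is vertical. Because $\overast{D}\textbf{F}=0$, the Chern connection preserves the horizontal subbundle (horizontal fields are exactly $\textbf{F}$ of vertical fields, and $\overast{D}_{hX}$ carries vertical to vertical by (\ref{chernconn.})); hence $\overast{D}_{hX}Y-\overast{D}_{hY}X$ is already horizontal and equals $h[X,Y]$, while $v[X,Y]=-\mathfrak{R}(X,Y)$. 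As $\N_{P^\ast}$ is horizontal, $[X,Y]\in\Gamma(\N_{P^\ast})$ holds if and only if the vertical part vanishes, i.e. $\mathfrak{R}(X,Y)=0$, and the horizontal part $h[X,Y]$ lies in $\N_{P^\ast}$. This isolates the first stated condition and reduces the problem to characterizing when $h[X,Y]\in\N_{P^\ast}$.

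The decisive step is to establish the identity
\[
(\overast{D}_{JZ}\overast{R})(X,Y)=\overast{P}(h[X,Y],Z)\qquad(X,Y\in\Gamma(\N_{P^\ast})),
\]
valid for every $Z$. To obtain it I would start from the Bianchi-type relation (\ref{eq.8}). Its right-hand side vanishes because, by Proposition \ref{pchern.nullity}(3), $\mathcal{C}'(X,\cdot)=\mathcal{C}'(Y,\cdot)=0$ for $X,Y\in\Gamma(\N_{P^\ast})$. In the surviving terms $(\overast{D}_{hX}\overast{P})(Y,Z)$ and $(\overast{D}_{hY}\overast{P})(X,Z)$ I would expand the covariant derivative tensorially; for $Z$ horizontal one has $\overast{P}(Y,Z)=0$ and $\overast{P}(Y,\overast{D}_{hX}Z)=0$, since $Y\in\N_{P^\ast}$ and both $Z$ and $\overast{D}_{hX}Z$ are horizontal, so only $-\overast{P}(\overast{D}_{hX}Y,Z)$ remains (and symmetrically $-\overast{P}(\overast{D}_{hY}X,Z)$). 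Substituting these into (\ref{eq.8}) yields $(\overast{D}_{JZ}\overast{R})(X,Y)=\overast{P}(\overast{D}_{hX}Y-\overast{D}_{hY}X,Z)=\overast{P}(h[X,Y],Z)$.

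With this identity the proof closes. By the definition of the nullity space, $h[X,Y]\in\N_{P^\ast}$ is equivalent to $\overast{P}(h[X,Y],w)=0$ for all horizontal $w$, and through the identity this is exactly $(\overast{D}_{Jw}\overast{R})(X,Y)=0$ for all such $w$; since $JZ=J(hZ)$ ranges over all vertical vectors as $Z$ runs over $\cppp$, this is precisely the second stated condition. Combining the two equivalences shows that $\N_{P^\ast}$ is involutive, hence completely integrable on $U$, exactly when $\mathfrak{R}(X,Y)=0$ and $(\overast{D}_{JZ}\overast{R})(X,Y)=0$ for all $X,Y\in\Gamma(\N_{P^\ast})$. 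The main obstacle is the derivation of the displayed identity: it requires justifying that $\overast{D}$ maps horizontal fields to horizontal fields and checking carefully that every extraneous term in the expansion of $\overast{D}\,\overast{P}$ is annihilated by the nullity hypothesis together with Proposition \ref{pchern.nullity}(3).
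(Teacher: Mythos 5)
Your argument is correct and follows essentially the same route as the paper: reduce complete integrability to involutivity, use the bracket identity (\ref{chern.[]}) to split $[hX,hY]$ into a horizontal part and the vertical part $-\mathfrak{R}(X,Y)$, and use the Bianchi-type identity (\ref{eq.8}), whose right-hand side is killed by the nullity hypothesis, to convert $(\,\overast{D}_{hX}\,\overast{P})(Y,Z)-(\,\overast{D}_{hY}\,\overast{P})(X,Z)$ into $\overast{P}(h[X,Y],Z)$ and hence identify $(\,\overast{D}_{JZ}\,\overast{R})(X,Y)$ with the obstruction to $h[X,Y]\in\N_{P^\ast}$. Your write-up is in fact somewhat more explicit than the paper's (isolating the key identity and justifying why the extraneous covariant-derivative terms vanish), but the underlying mechanism is identical.
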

\begin{proof}
\emph{Necessity}. Let $\N_{P^\ast}$ be completely integrable. Then, if $X,Y\in \Gamma(\N_{P^\ast})$,  the bracket $[hX,hY]$ is horizontal, thus, $\mathfrak{R}(X,Y)=0$. Also,  by (\ref{eq.8}) and the fact that\, $\overast{P}([hX,hY],Z)=( \,\overast{D}_{hX}\,\overast{P})(Y,Z)-(\,\overast{D}_{hY}\,\overast{P})(X,Z)=0$ (by (\ref{chern.[]})),  we have \\$ (\,\overast{D}_{JZ}\,\overast{R})(X,Y)=0,\, \text{for all}\,\, X,Y \in \Gamma(\N_{P^\ast}), \, \text{for all}\,\, Z\in \cppp$.

\emph{ Sufficiency}. Let  $\mathfrak{R}(X,Y)=0$  and $ (\,\overast{D}_{JZ}\,\,\overast{R})(X,Y)=0$ for all $X, Y\in \Gamma(\N_{P^\ast})$. As  $0=\mathfrak{R}(X,Y)=-v[hX,hY]=-v[X,Y]$,  the bracket
 $[X,Y]$ is horizontal. Making use of (\ref{chern.[]}) and (\ref{eq.8}), we get
 \begin{eqnarray*}
 % \nonumber to remove numbering (before each equation)
  ( \,\overast{D}_{hX}\,\overast{P})(Y,Z)-(\,\overast{D}_{hY}\,\overast{P})(X,Z)=0&\Longrightarrow&\overast{P}(\,\overast{D}_{X}Y-\,\overast{D}_{Y}X,Z)=0 \\
    &\Longrightarrow& \overast{P}([X,Y]+\mathfrak{R}(X,Y),Z)=0 \\
    &\Longrightarrow& \overast{P}([X,Y],Z)=0\\
    &\Longrightarrow&[X,Y]\in \Gamma(\N_{P^\ast}).
 \end{eqnarray*}
Hence $\N_{P^\ast}$ is completely integrable.
\end{proof}

By the property  \,  $\overast{P}(X,Y)Z=\overast{P}(Z,Y)X$ we have the following result.

\begin{thm}
The nullity distribution  $\N_{P^\ast}$ and the kernel distribution  $\text{Ker}_{P^\ast}$   coincide.
\end{thm}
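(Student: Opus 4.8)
The plan is to use only the slot-interchange symmetry $\overast{P}(X,Y)Z=\overast{P}(Z,Y)X$ recorded just before the statement, together with the two defining conditions. Recall that $v\in \N_{P^\ast}(z)$ means that the operator $\overast{P}_z(v,\cdot)$ vanishes, i.e. $\overast{P}_z(v,w)u=0$ for all $w,u\in H_z(TM)$, whereas $u\in \text{Ker}_{P^\ast}(z)$ means $\overast{P}_z(v,w)u=0$ for all $v,w\in H_z(TM)$. The essential observation is that these two conditions differ only in whether the distinguished vector occupies the \emph{first} slot or the \emph{third} slot of $\overast{P}$, and the quoted symmetry is precisely the interchange of those two slots. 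So I expect the proof to reduce to reading this symmetry in both directions, with everything carried out pointwise at each $z\in TM$.

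First I would establish $\N_{P^\ast}\subseteq \text{Ker}_{P^\ast}$. Take $X\in\Gamma(\N_{P^\ast})$ and arbitrary $Y,Z\in\mathfrak{X}^h(\T M)$. By the symmetry, $\overast{P}(Y,Z)X=\overast{P}(X,Z)Y$, and the right-hand side vanishes because $X$ is a nullity vector field (so $\overast{P}(X,\cdot)$ is the zero operator). Hence $\overast{P}(Y,Z)X=0$ for all $Y,Z$, which is exactly the condition $X\in\Gamma(\text{Ker}_{P^\ast})$.

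Conversely I would establish $\text{Ker}_{P^\ast}\subseteq \N_{P^\ast}$. Take $X\in\Gamma(\text{Ker}_{P^\ast})$ and arbitrary $Y,Z\in\mathfrak{X}^h(\T M)$. Applying the symmetry in the other direction, $\overast{P}(X,Z)Y=\overast{P}(Y,Z)X$, and the right-hand side now vanishes because $X$ lies in the kernel. Thus $\overast{P}(X,Z)Y=0$ for all $Y$, i.e. the operator $\overast{P}(X,Z)$ is zero for every $Z$, which says precisely that $X\in\Gamma(\N_{P^\ast})$. The two inclusions together give $\N_{P^\ast}=\text{Ker}_{P^\ast}$.

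I do not anticipate any genuine obstacle: the whole content is the symmetry in the first and third arguments, and the only point requiring care is bookkeeping, namely keeping straight that $\N_{P^\ast}$ constrains the first slot while $\text{Ker}_{P^\ast}$ constrains the third slot, and noting that both membership conditions are pointwise and $C^\infty(TM)$-linear, so the argument is purely algebraic at each $z$ and passes without difficulty from vectors to sections.
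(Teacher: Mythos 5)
Your proof is correct and is exactly the argument the paper intends: the paper offers no written-out proof beyond the remark ``By the property $\overast{P}(X,Y)Z=\overast{P}(Z,Y)X$ we have the following result,'' and your two inclusions simply make explicit the pointwise slot-interchange that this remark leaves implicit.
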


 A Finsler manifold in which   the Chern hv-curvature tensor \, $\overast{P}$   vanishes is called a Berwald space \cite{szilasi}. It is well known that every Berwald space is a Landsberg space, but it is not known whether the converse is   true. In \cite{Shen-Landsberg}, Shen introduced a class of non-regular Finsler metrics which is Landsbergian and not Berwaldian. The calculations are not easy, especially, if one wants to study some concrete examples.   Here, by using Maple program together with the results of  \cite{Shen-Landsberg} and \cite{CFG}, we give a simple  class of  proper non-regular non Berwaldian Landsbergian spaces.\\

\begin{example}\label{ex.3} Let
$M= \mathbb{R}^3$,
$U=\{(x^1,x^2,x^3;y^1,y^2,y^3)\in\mathbb{R}^3\times \mathbb{R}^3:\, y^2>0,y^3 >0  \}\subset TM$. Define   $F$  on $U$ by
$$F(x,y):=f(x^1)\sqrt{{(y^1)}^2+y^2y^3+y^1\sqrt{y^2y^3}}\,e^{\frac{1}{\sqrt{3}}\arctan\Big(\frac{2y^1}{\sqrt{3y^2y^3}}+\frac{1}{\sqrt{3}}\Big)}.$$
The idea is to compute the Landsberg tensor $L_{ijk}$ and the Berwald tensor $G^h_{ijk}$ which are locally given  by
$$L_{ijk}:=\frac{F}{2}\frac{\partial F}{\partial y^h}G^h_{ijk},\,\,\,\,G^h_{ijk}:=\frac{\partial^3 G^h}{\partial y^i\partial y^j\partial y^k}.$$
Then, we show that the Landsberg tensor vanishes identically while there are some non vanishing components of the Berwald tensor (for simplicity we consider only one nonzero component and check it at a point) .
\begin{Maple Normal}{
\begin{Maple Normal}{
\mapleinline{inert}{2d}{}{\[\displaystyle \]}
}\end{Maple Normal}
}\end{Maple Normal}
\begin{maplegroup}
\begin{mapleinput}
\mapleinline{active}{1d}{restart}{\[{\it restart}\]}
\end{mapleinput}
\end{maplegroup}
\begin{maplegroup}
\begin{mapleinput}
\mapleinline{active}{1d}{}{\[F\, := \,f \left( {\it x1} \right)  \sqrt{{{\it y1}}^{2}+{\it y2}\,{\it y3}+{\it y1}\, \sqrt{{\it y2}\,{\it y3}}}\\
\mbox{}{{\rm e}^{\arctan \left( 2\,{\frac {{\it y1}}{ \sqrt{3} \sqrt{{\it y2}\,{\it y3}}}}+ \left(  \sqrt{3} \right) ^{-1} \right)  \left(  \sqrt{3} \right) ^{-1}}}\]}
\end{mapleinput}
\mapleresult
\begin{maplelatex}
\mapleinline{inert}{2d}{}{\[\displaystyle F\, := \,f \left( {\it x1} \right)  \sqrt{{{\it y1}}^{2}+{\it y2}\,{\it y3}+{\it y1}\, \sqrt{{\it y2}\,{\it y3}}}\,\,{{\rm e}^{\frac{\sqrt{3}}{3}\,\arctan \left( \frac{2}{3}\,{\frac {{\it y1}\, \sqrt{3}}{ \sqrt{{\it y2}\,{\it y3}}}}+\frac{\sqrt{3}}{3} \right)  }}\]}
\end{maplelatex}
\end{maplegroup}
\begin{maplegroup}
\begin{mapleinput}
\mapleinline{active}{2d}{}{\[\]}
\end{mapleinput}
\end{maplegroup}
\begin{maplegroup}
\begin{mapleinput}
\mapleinline{active}{2d}{}{\[\]}
\end{mapleinput}
\mapleresult
\begin{maplegroup}
\begin{mapleinput}
\mapleinline{active}{1d}{simplify(G1)}{\[\]}
\end{mapleinput}
\end{maplegroup}
\begin{maplelatex}
\mapleinline{inert}{2d}{}{\[\displaystyle {\it G1}\, := \frac{1}{2}\,{\frac { \left( {{\it y1}}^{2}-{\it y2}\,{\it y3} \right) {\frac {d}{d{\it x1}}}f \left( {\it x1} \right) }{f \left( {\it x1} \right) }}\]}
\end{maplelatex}
\end{maplegroup}
\begin{maplegroup}
\begin{maplegroup}
\begin{mapleinput}
\mapleinline{active}{1d}{simplify(G2)}{\[\]}
\end{mapleinput}
\end{maplegroup}
\mapleresult
\begin{maplelatex}
\mapleinline{inert}{2d}{}{}{\[\displaystyle {\it G2}\, := \frac{1}{2}\,  \left( {\frac {d}{d{\it x1}}}f \left( {\it x1} \right)  \right) {{\it y2}}^{2}{\it y3}\, ( 92\,{{\it y2}}^{5}{{\it y3}}^{5}{{\it y1}}^{3}+408\,{{\it y2}}^{3}{{\it y3}}^{3}{{\it y1}}^{7}+230\,{{\it y2}}^{2}{{\it y3}}^{2}{{\it y1}}^{9}\]}
\end{maplelatex}
\begin{maplelatex}
\mapleinline{inert}{2d}{}{}{\[
+48\,{\it y2}\,{\it y3}\,{{\it y1}}^{11}
+8\,{{\it y2}}^{6}{{\it y3}}^{6}{\it y1}+306\,{{\it y2}}^{4}{{\it y3}}^{4}{{\it y1}}^{5}
+2\,{{\it y1}}^{13}+({{\it y2}}^{6}{{\it y3}}^{6} +33\,{{\it y2}}^{5}{{\it y3}}^{5} {{\it y1}}^{2}\]}
\end{maplelatex}
\begin{maplelatex}
\mapleinline{inert}{2d}{}{}{\[
+190\,{{\it y2}}^{4}{{\it y3}}^{4} {{\it y1}}^{4}
+121\,{\it y2}\,{\it y3}\, {{\it y1}}^{10}+342\,{{\it y2}}^{2}{{\it y3}}^{2} {{\it y1}}^{8}
+393\,{{\it y2}}^{3}{{\it y3}}^{3} {{\it y1}}^{6}\]}
\end{maplelatex}
\begin{maplelatex}
\mapleinline{inert}{2d}{}{}{\[
+13\, {{\it y1}}^{12})\sqrt{{\it y2}\,{\it y3}} ) /(f \left( {\it x1} \right)  ( 50\,{{\it y2}}^{5}{{\it y3}}^{5}{{\it y1}}^{3}+126\,{{\it y2}}^{3}{{\it y3}}^{3}{{\it y1}}^{7}+50\,{{\it y2}}^{2}{{\it y3}}^{2}{{\it y1}}^{9}\]}
\end{maplelatex}
\begin{maplelatex}
\mapleinline{inert}{2d}{}{}{\[
+6\,{\it y2}\,{\it y3}\,{{\it y1}}^{11}+6\,{{\it y2}}^{6}{{\it y3}}^{6}{\it y1}+126\,{{\it y2}}^{4}{{\it y3}}^{4}{{\it y1}}^{5}+({{\it y2}}^{6}{{\it y3}}^{6}
+21\,{{\it y2}}^{5}{{\it y3}}^{5} {{\it y1}}^{2}+ {{\it y1}}^{12}\]}
\end{maplelatex}
\begin{maplelatex}
\mapleinline{inert}{2d}{}{}{\[
+90\,{{\it y2}}^{4}{{\it y3}}^{4} {{\it y1}}^{4}+21\,{\it y2}\,{\it y3}\, {{\it y1}}^{10}+90\,{{\it y2}}^{2}{{\it y3}}^{2} {{\it y1}}^{8}+141\,{{\it y2}}^{3}{{\it y3}}^{3} {{\it y1}}^{6}
)\sqrt{{\it y2}\,{\it y3}} )  \sqrt{{\it y2}\,{\it y3}})\]}
\end{maplelatex}
\end{maplegroup}
\vspace{8pt}
\begin{maplegroup}
\begin{maplegroup}
\begin{mapleinput}
\mapleinline{active}{1d}{simplify(G3)}{\[\]}
\end{mapleinput}
\end{maplegroup}
\mapleresult
\begin{maplelatex}
\mapleinline{inert}{2d}{}{}{\[\displaystyle {\it G3}\, := \frac{1}{2}\, \left( {\frac {d}{d{\it x1}}}f \left( {\it x1} \right)  \right) {{\it y3}}^{2}{\it y2}\, ( 408\,{{\it y2}}^{3}{{\it y3}}^{3}{{\it y1}}^{7}+230\,{{\it y2}}^{2}{{\it y3}}^{2}{{\it y1}}^{9}+8\,{{\it y2}}^{6}{{\it y3}}^{6}{\it y1}\]}
\end{maplelatex}
\begin{maplelatex}
\mapleinline{inert}{2d}{}{}{\[
+2\,{{\it y1}}^{13}+(33\,{{\it y1}}^{2}{{\it y2}}^{5}{{\it y3}}^{5} +393\,{{\it y1}}^{6}{{\it y2}}^{3}{{\it y3}}^{3}+342\,{{\it y1}}^{8}{{\it y3}}^{2}{{\it y2}}^{2}+121\,{{\it y1}}^{10}{\it y3}\,{\it y2}\,  \]}
\end{maplelatex}
\begin{maplelatex}
\mapleinline{inert}{2d}{}{}{\[
 +190\,{{\it y1}}^{4}{{\it y3}}^{4}{{\it y2}}^{4} +13\,{{\it y1}}^{12} +{{\it y2}}^{6}{{\it y3}}^{6})\sqrt{{\it y2}\,{\it y3}}+92\,{{\it y2}}^{5}{{\it y3}}^{5}{{\it y1}}^{3} \]}
\end{maplelatex}
\begin{maplelatex}
\mapleinline{inert}{2d}{}{}{\[
+306\,{{\it y2}}^{4}{{\it y3}}^{4}{{\it y1}}^{5}+48\,{\it y2}\,{\it y3}\,{{\it y1}}^{11} ) /(f \left( {\it x1} \right)  ( 50\,{{\it y2}}^{2}{{\it y3}}^{2}{{\it y1}}^{9}+6\,{\it y2}\,{\it y3}\,{{\it y1}}^{11}\]}
\end{maplelatex}
\begin{maplelatex}
\mapleinline{inert}{2d}{}{}{\[
+126\,{{\it y2}}^{3}{{\it y3}}^{3}{{\it y1}}^{7}+6\,{{\it y2}}^{6}{{\it y3}}^{6}{\it y1}+(90\,{{\it y1}}^{4}{{\it y3}}^{4}{{\it y2}}^{4} +141\,{{\it y1}}^{6}{{\it y2}}^{3}{{\it y3}}^{3} \sqrt{{\it y2}\,{\it y3}}\]}
\end{maplelatex}
\begin{maplelatex}
\mapleinline{inert}{2d}{}{}{\[
+21\,{{\it y1}}^{10}{\it y3}\,{\it y2}\, +90\,{{\it y1}}^{8}{{\it y3}}^{2}{{\it y2}}^{2} +21\,{{\it y1}}^{2}{{\it y2}}^{5}{{\it y3}}^{5} +{{\it y1}}^{12} +{{\it y2}}^{6}{{\it y3}}^{6})\sqrt{{\it y2}\,{\it y3}}\]}
\end{maplelatex}
\begin{maplelatex}
\mapleinline{inert}{2d}{}{}{\[
 +126\,{{\it y2}}^{4}{{\it y3}}^{4}{{\it y1}}^{5}+50\,{{\it y2}}^{5}{{\it y3}}^{5}{{\it y1}}^{3} )  \sqrt{{\it y2}\,{\it y3}})\]}
\end{maplelatex}
\end{maplegroup}

\begin{maplegroup}
\begin{mapleinput}
\mapleinline{active}{2d}{}{\[\]}
\end{mapleinput}
\end{maplegroup}
\begin{maplegroup}
\begin{mapleinput}
\mapleinline{active}{1d}{y1 := y[1]; 1; y2 := y[2]; 1; y3 := y[3]}{\[\]}
\end{mapleinput}
\mapleresult
\begin{maplelatex}
\mapleinline{inert}{2d}{y1 := y[1]}{\[\displaystyle {\it y1}\, := \,{\it y}_{{1}}\]}
\end{maplelatex}
\mapleresult
\begin{maplelatex}
\mapleinline{inert}{2d}{y2 := y[2]}{\[\displaystyle {\it y2}\, := \,{\it y}_{{2}}\]}
\end{maplelatex}
\mapleresult
\begin{maplelatex}
\mapleinline{inert}{2d}{y3 := y[3]}{\[\displaystyle {\it y3}\, := \,{\it y}_{{3}}\]}
\end{maplelatex}
\end{maplegroup}
\vspace{8pt}
\begin{maplegroup}
\begin{mapleinput}
\mapleinline{active}{1d}{printlevel := 3;
 for i to 3 do
  for j to i do
   for k to j do
  Landsberg[i,j,k] := simplify((diff(F,y1))*(diff(G1,y[i],y[j],y[k]))
   +(diff(F,y2))*(diff(G2,y[i],y[j],y[k]))
   +(diff(F,y3))*(diff(G3,y[i],y[j],y[k])));
    end do;
   end do;
   end do; }{\[\]}
\end{mapleinput}
\mapleresult
\begin{maplelatex}
\mapleinline{inert}{2d}{Landsberg[1, 1, 1] := 0}{\[\displaystyle {\it Landsberg}_{{1,1,1}}\, := \,0\]}
\end{maplelatex}
\mapleresult
\begin{maplelatex}
\mapleinline{inert}{2d}{Landsberg[2, 1, 1] := 0}{\[\displaystyle {\it Landsberg}_{{2,1,1}}\, := \,0\]}
\end{maplelatex}
\mapleresult
\begin{maplelatex}
\mapleinline{inert}{2d}{Landsberg[2, 2, 1] := 0}{\[\displaystyle {\it Landsberg}_{{2,2,1}}\, := \,0\]}
\end{maplelatex}
\mapleresult
\begin{maplelatex}
\mapleinline{inert}{2d}{Landsberg[2, 2, 2] := 0}{\[\displaystyle {\it Landsberg}_{{2,2,2}}\, := \,0\]}
\end{maplelatex}
\mapleresult
\begin{maplelatex}
\mapleinline{inert}{2d}{Landsberg[3, 1, 1] := 0}{\[\displaystyle {\it Landsberg}_{{3,1,1}}\, := \,0\]}
\end{maplelatex}
\mapleresult
\begin{maplelatex}
\mapleinline{inert}{2d}{Landsberg[3, 2, 1] := 0}{\[\displaystyle {\it Landsberg}_{{3,2,1}}\, := \,0\]}
\end{maplelatex}
\mapleresult
\begin{maplelatex}
\mapleinline{inert}{2d}{Landsberg[3, 2, 2] := 0}{\[\displaystyle {\it Landsberg}_{{3,2,2}}\, := \,0\]}
\end{maplelatex}
\mapleresult
\begin{maplelatex}
\mapleinline{inert}{2d}{Landsberg[3, 3, 1] := 0}{\[\displaystyle {\it Landsberg}_{{3,3,1}}\, := \,0\]}
\end{maplelatex}
\mapleresult
\begin{maplelatex}
\mapleinline{inert}{2d}{Landsberg[3, 3, 2] := 0}{\[\displaystyle {\it Landsberg}_{{3,3,2}}\, := \,0\]}
\end{maplelatex}
\mapleresult
\begin{maplelatex}
\mapleinline{inert}{2d}{Landsberg[3, 3, 3] := 0}{\[\displaystyle {\it Landsberg}_{{3,3,3}}\, := \,0\]}
\end{maplelatex}
\end{maplegroup}
\vspace{8pt}
\begin{maplegroup}
\begin{mapleinput}
\mapleinline{active}{1d}{Berwald[2, 2, 2] := simplify(diff(G2, y[2], y[2], y[2]))}{\[{\it Berwald}\, := \,{\it simplify} \left( {\frac {d^{3}}{d{{\it y}_{{2}}}^{3}}}{\it G2} \right) \]}
\end{mapleinput}
\mapleresult
\begin{maplelatex}
\mapleinline{inert}{2d}{}{}{\[\displaystyle {\it Berwald_{{2,2,2}}}\, := \frac{-3}{16}  {\frac {d \,f( {\it x1})}{d{\it x1}}}   {\it y}_{{2}}{{\it y}_{{3}}}^{3} (( 123286440\,{{\it y}_{{2}}}^{5}{{\it y}_{{3}}}^{5}{{\it y}_{{1}}}^{37}+6190070040\,{{\it y}_{{2}}}^{8}{{\it y}_{{3}}}^{8} {{\it y}_{{1}}}^{31}\]}
\end{maplelatex}
\begin{maplelatex}
\mapleinline{inert}{2d}{}{}{\[
+13029127584\,{{\it y}_{{2}}}^{9}{{\it y}_{{3}}}^{9} {{\it y}_{{1}}}^{29}+21263575256\,{{\it y}_{{2}}}^{13}{{\it y}_{{3}}}^{13} {{\it y}_{{1}}}^{21}+13029127584\,{{\it y}_{{2}}}^{14}{{\it y}_{{3}}}^{14} \]}
\end{maplelatex}
\begin{maplelatex}
\mapleinline{inert}{2d}{}{}{\[
{{\it y}_{{1}}}^{19}+6190070040\,{{\it y}_{{2}}}^{15}{{\it y}_{{3}}}^{15}
{{\it y}_{{1}}}^{17}+2252056776\,{{\it y}_{{2}}}^{7}{{\it y}_{{3}}}^{7} {{\it y}_{{1}}}^{33}+2576\,{{\it y}_{{2}}}^{22}{{\it y}_{{3}}}^{22}{{\it y}_{{1}}}^{3}\]}
\end{maplelatex}
\begin{maplelatex}
\mapleinline{inert}{2d}{}{}{\[
+1621224\,{{\it y}_{{2}}}^{20}{{\it y}_{{3}}}^{20} {{\it y}_{{1}}}^{7}+21263575256\,{{\it y}_{{2}}}^{10}{{\it y}_{{3}}}^{10} {{\it y}_{{1}}}^{27}+27114249960\,{{\it y}_{{2}}}^{12}{{\it y}_{{3}}}^{12} {{\it y}_{{1}}}^{23}\]}
\end{maplelatex}
\begin{maplelatex}
\mapleinline{inert}{2d}{}{}{\[
+27114249960\,{{\it y}_{{2}}}^{11}{{\it y}_{{3}}}^{11} {{\it y}_{{1}}}^{25}+2576\,{\it y}_{{2}}{\it y}_{{3}}{{\it y}_{{1}}}^{45} +{{\it y}_{{2}}}^{24}{{\it y}_{{3}}}^{24}+17363896\,{{\it y}_{{2}}}^{4}{{\it y}_{{3}}}^{4}{{\it y}_{{1}}}^{39} \]}
\end{maplelatex}
\begin{maplelatex}
\mapleinline{inert}{2d}{}{}{\[
+24\,{{\it y}_{{1}}}^{47}+91080\,{{\it y}_{{2}}}^{21}{{\it y}_{{3}}}^{21} {{\it y}_{{1}}}^{5}
+17363896\,{{\it y}_{{2}}}^{19}{{\it y}_{{3}}}^{19}
{{\it y}_{{1}}}^{9}+91080\,{{\it y}_{{2}}}^{2}{{\it y}_{{3}}}^{2}{{\it y}_{{1}}}^{43} ) \sqrt{{\it y}_{{2}}{\it y}_{{3}}}\]}
\end{maplelatex}
\begin{maplelatex}
\mapleinline{inert}{2d}{}{}{\[
+412896\,{{\it y}_{{2}}}^{21}{{\it y}_{{3}}}^{21}{{\it y}_{{1}}}^{6}+{{\it y}_{{1}}}^{48}+412896\,{{\it y}_{{2}}}^{3}{{\it y}_{{3}}}^{3}{{\it y}_{{1}}}^{42}
+300\,{{\it y}_{{2}}}^{23}{{\it y}_{{3}}}^{23}{{\it y}_{{1}}}^{2}+16974\,{{\it y}_{{2}}}^{22}{{\it y}_{{3}}}^{22}{{\it y}_{{1}}}^{4}\]}
\end{maplelatex}
\begin{maplelatex}
\mapleinline{inert}{2d}{}{}{\[
+5612805\,{{\it y}_{{2}}}^{20}{{\it y}_{{3}}}^{20}{{\it y}_{{1}}}^{8}
+48497064\,{{\it y}_{{2}}}^{19}{{\it y}_{{3}}}^{19}{{\it y}_{{1}}}^{10}+287134346\,{{\it y}_{{2}}}^{18}{{\it y}_{{3}}}^{18}{{\it y}_{{1}}}^{12}\]}
\end{maplelatex}
\begin{maplelatex}
\mapleinline{inert}{2d}{}{}{\[
+1222297740\,{{\it y}_{{2}}}^{17}{{\it y}_{{3}}}^{17}{{\it y}_{{1}}}^{14}+3864164634\,{{\it y}_{{2}}}^{16}{{\it y}_{{3}}}^{16}{{\it y}_{{1}}}^{16}
+9276875476\,{{\it y}_{{2}}}^{15}{{\it y}_{{3}}}^{15}{{\it y}_{{1}}}^{18}\]}
\end{maplelatex}
\begin{maplelatex}
\mapleinline{inert}{2d}{}{}{\[
+17172595110\,{{\it y}_{{2}}}^{14}{{\it y}_{{3}}}^{14}{{\it y}_{{1}}}^{20}+24755608584\,{{\it y}_{{2}}}^{13}{{\it y}_{{3}}}^{13}{{\it y}_{{1}}}^{22}+27948336381\,{{\it y}_{{2}}}^{12}{{\it y}_{{3}}}^{12}{{\it y}_{{1}}}^{24}\]}
\end{maplelatex}
\begin{maplelatex}
\mapleinline{inert}{2d}{}{}{\[
+24755608584\,{{\it y}_{{2}}}^{11}{{\it y}_{{3}}}^{11}{{\it y}_{{1}}}^{26}+17172595110\,{{\it y}_{{2}}}^{10}{{\it y}_{{3}}}^{10}{{\it y}_{{1}}}^{28}+9276875476\,{{\it y}_{{2}}}^{9}{{\it y}_{{3}}}^{9}{{\it y}_{{1}}}^{30}\]}
\end{maplelatex}
\begin{maplelatex}
\mapleinline{inert}{2d}{}{}{\[
+3864164634\,{{\it y}_{{2}}}^{8}{{\it y}_{{3}}}^{8}{{\it y}_{{1}}}^{32}
+1222297740\,{{\it y}_{{2}}}^{7}{{\it y}_{{3}}}^{7}{{\it y}_{{1}}}^{34}+287134346\,{{\it y}_{{2}}}^{6}{{\it y}_{{3}}}^{6}{{\it y}_{{1}}}^{36}\]}
\end{maplelatex}
\begin{maplelatex}
\mapleinline{inert}{2d}{}{}{\[
+300\,{\it y}_{{3}}{\it y}_{{2}}{{\it y}_{{1}}}^{46}+48497064\,{{\it y}_{{2}}}^{5}{{\it y}_{{3}}}^{5}{{\it y}_{{1}}}^{38}+5612805\,{{\it y}_{{2}}}^{4}{{\it y}_{{3}}}^{4}{{\it y}_{{1}}}^{40}
+16974\,{{\it y}_{{3}}}^{2}{{\it y}_{{2}}}^{2}{{\it y}_{{1}}}^{44}\]}
\end{maplelatex}
\begin{maplelatex}
\mapleinline{inert}{2d}{}{}{\[
+(615939264\,{{\it y}_{{2}}}^{6}{{\it y}_{{3}}}^{6}{{\it y}_{{1}}}^{35}
+123286440\,{{\it y}_{{2}}}^{18}{{\it y}_{{3}}}^{18}
{{\it y}_{{1}}}^{11}+615939264\,{{\it y}_{{2}}}^{17}{{\it y}_{{3}}}^{17}
{{\it y}_{{1}}}^{13}\]}
\end{maplelatex}
\begin{maplelatex}
\mapleinline{inert}{2d}{}{}{\[
+1621224\,{{\it y}_{{2}}}^{3}{{\it y}_{{3}}}^{3}{{\it y}_{{1}}}^{41}+2252056776\,{{\it y}_{{2}}}^{16}{{\it y}_{{3}}}^{16}
{{\it y}_{{1}}}^{15}+24\,{{\it y}_{{2}}}^{23}{{\it y}_{{3}}}^{23} {\it y}_{{1}}) \sqrt{{\it y}_{{2}}{\it y}_{{3}}} ) \]}
\end{maplelatex}
\begin{maplelatex}
\mapleinline{inert}{2d}{}{}{\[
/ (
 \sqrt{{\it y}_{{2}}{\it y}_{{3}}}f \left( {\it x1} \right)  ( 50\,{{\it y}_{{2}}}^{5}{{\it y}_{{3}}}^{5}{{\it y}_{{1}}}^{3}+126\,{{\it y}_{{2}}}^{3}{{\it y}_{{3}}}^{3}{{\it y}_{{1}}}^{7}+50\,{{\it y}_{{2}}}^{2}{{\it y}_{{3}}}^{2}{{\it y}_{{1}}}^{9}\]}
\end{maplelatex}
\begin{maplelatex}
\mapleinline{inert}{2d}{}{}{\[
+6\,{\it y}_{{2}}{\it y}_{{3}}{{\it y}_{{1}}}^{11}+6\,{{\it y}_{{2}}}^{6}{{\it y}_{{3}}}^{6}{\it y}_{{1}}+126\,{{\it y}_{{2}}}^{4}{{\it y}_{{3}}}^{4}{{\it y}_{{1}}}^{5}
+({{\it y}_{{2}}}^{6}{{\it y}_{{3}}}^{6} +21\,{{\it y}_{{2}}}^{5}{{\it y}_{{3}}}^{5} {{\it y}_{{1}}}^{2}\]}
\end{maplelatex}
\begin{maplelatex}
\mapleinline{inert}{2d}{}{}{\[
+90\,{{\it y}_{{2}}}^{4}{{\it y}_{{3}}}^{4}{{\it y}_{{1}}}^{4}+21\,{\it y}_{{2}}{\it y}_{{3}} {{\it y}_{{1}}}^{10}
+90\,{{\it y}_{{2}}}^{2}{{\it y}_{{3}}}^{2} {{\it y}_{{1}}}^{8}+141\,{{\it y}_{{2}}}^{3}{{\it y}_{{3}}}^{3} {{\it y}_{{1}}}^{6}
+ {{\it y}_{{1}}}^{12})\sqrt{{\it y}_{{2}}{\it y}_{{3}}} ) ^{4}
\]}
\end{maplelatex}
\end{maplegroup}
\vspace{8pt}
\begin{maplegroup}
\begin{mapleinput}
\mapleinline{active}{1d}{y[1] := 0; 1; y[2] := 1; 1; y[3] := 1; }{\[\]}
\end{mapleinput}
\mapleresult
\begin{maplelatex}
\mapleinline{inert}{2d}{y[1] := 0}{\[\displaystyle {\it y}_{{1}}\, := \,0\]}
\end{maplelatex}
\mapleresult
\begin{maplelatex}
\mapleinline{inert}{2d}{y[2] := 1}{\[\displaystyle {\it y}_{{2}}\, := \,1\]}
\end{maplelatex}
\mapleresult
\begin{maplelatex}
\mapleinline{inert}{2d}{y[3] := 1}{\[\displaystyle {\it y}_{{3}}\, := \,1\]}
\end{maplelatex}
\end{maplegroup}
\begin{maplegroup}
\begin{mapleinput}
\mapleinline{active}{1d}{simplify(Berwald[2, 2, 2])}{\[{\it simplify} \left( {\it Berwald} \right) \]}
\end{mapleinput}
\mapleresult
\begin{maplelatex}
\mapleinline{inert}{2d}{-(3/16)*(diff(f(x1), x1))/f(x1)}{\[\displaystyle \frac{-3}{16}\,{\frac {{\frac {d}{d{\it x1}}}f \left( {\it x1} \right) }{f \left( {\it x1} \right) }}\]}
\end{maplelatex}
\end{maplegroup}
\end{example}

\bigskip

 By Example \ref{ex.3}, for any non constant  positive smooth function $f$ on $\mathbb{R}$,    the Landsberg tensor of  $(M,F)$ vanishes (or equivalently, the hv-curvature $P$ of the Cartan connection vanishes)  and hence the class is Landsbergian. On the other hand, the hv-curvature \, $\overast{P}$ of the Chern  connection does not vanish and hence the class is not Berwaldian. So we can confirm:

 \begin{thm}
 There are non-regular Landsberg spaces which are not Berwaldian.
\end{thm}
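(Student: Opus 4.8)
The statement is an existence result, and the plan is to establish it by exhibiting the explicit one–parameter family of metrics constructed in Example~\ref{ex.3} and checking that each member enjoys all three required properties at once: it is non-regular, Landsbergian, and non-Berwaldian. Concretely I would fix $M=\mathbb{R}^3$, let $f$ be an arbitrary non-constant positive smooth function on $\mathbb{R}$, and take $F$ on the open set $U=\{y^2>0,\ y^3>0\}\subset\T M$ as in Example~\ref{ex.3}. Non-regularity is then immediate: $F$ is defined and smooth only on $U$ (its closed form involves $\sqrt{y^2 y^3}$ and breaks down elsewhere on $\T M$), so $(M,F)$ is a genuinely singular Finsler metric rather than a classical one.

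To verify that $(M,F)$ is Landsbergian I would invoke the characterization recalled in this section, namely that a Finsler manifold is Landsbergian exactly when its Landsberg tensor $\mathcal{C}'$ vanishes (equivalently, when the Cartan hv-curvature $P$ vanishes). Using the local formula $L_{ijk}=\tfrac{F}{2}\,\tfrac{\partial F}{\partial y^h}\,G^h_{ijk}$ together with the spray coefficients $G^h$ of $F$, one computes every component $L_{ijk}$; the symbolic calculation in Example~\ref{ex.3} returns $L_{ijk}=0$ identically, whence $\mathcal{C}'=0$ and $(M,F)$ is a Landsberg space.

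For non-Berwaldianness I would use the fact that, by definition, $(M,F)$ is Berwaldian precisely when the Chern hv-curvature $\overast{P}$ vanishes, and that $\overast{P}$ is governed by the Berwald tensor $G^h_{ijk}=\partial^3 G^h/\partial y^i\partial y^j\partial y^k$. It therefore suffices to produce a single surviving component. Differentiating the computed spray coefficient $G^2$ three times in $y^2$ and evaluating at $(y^1,y^2,y^3)=(0,1,1)$ yields $G^2_{222}=-\tfrac{3}{16}\,\tfrac{f'(x^1)}{f(x^1)}$, which is nonzero precisely because $f$ is non-constant. Hence $\overast{P}\neq0$ and $(M,F)$ fails to be a Berwald space, completing the existence argument.

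The conceptual load here is light: the argument merely wraps the defining characterizations of ``Landsbergian'' and ``Berwaldian'' around one explicit metric. The main obstacle is purely computational---verifying that all Landsberg components vanish while at least one Berwald component persists---and this is precisely what the Maple and NF-package calculations in Example~\ref{ex.3} accomplish. The only genuine subtlety is building the non-constancy of $f$ into the metric so that the surviving Berwald component $G^2_{222}\propto f'/f$ is guaranteed nonzero, which is what separates this Landsberg class from the Berwald ones.
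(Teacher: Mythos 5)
Your proposal is correct and follows essentially the same route as the paper: the theorem is established exactly by Example~\ref{ex.3}, with non-regularity coming from the restricted domain $U$, the Landsberg property from the symbolic verification that all $L_{ijk}$ vanish, and non-Berwaldianness from the single surviving component $G^2_{222}=-\tfrac{3}{16}\,f'(x^1)/f(x^1)\neq 0$ for non-constant $f$. No substantive difference from the paper's argument.
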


%%%%%%%%%%%%%%%%%%%%%%%%%%%%%%%%%%%%%%%%%%%%%%%%%%%%%%%%%%%%%%%%%%%%%%%%%%%%%%%%%%%%%%%%%%%%%%%%%%%%%%%%%%%%%%%%
\section*{Acknowledgement} The second author would like to express his deep  gratitude to Professors J\'{o}zsef Szilasi,  Zolt\'{a}n Muzsnay and Mr. D\'{a}vid Kert\'{e}sz (University of Debrecen) for their valuable discussions and comments.

%%%%%%%%%%%%%%%%%%%%%%%%%%%%%%%%%%%%%%%%%%%%%%%%%%%%%%%%%%%%%%%%%%%%%%%%%%%%%%%%%%%%%%%%%%%%%%%%%%%%%%%%%%%%%%%%
\providecommand{\bysame}{\leavevmode\hbox
to3em{\hrulefill}\thinspace}
\providecommand{\MR}{\relax\ifhmode\unskip\space\fi MR }
% \MRhref is called by the amsart/book/proc definition of \MR.
\providecommand{\MRhref}[2]{%
  \href{http://www.ams.org/mathscinet-getitem?mr=#1}{#2}
} \providecommand{\href}[2]{#2}

\end{document}